\documentclass[11pt,a4paper,reqno]{amsart}
\usepackage{mathptmx}
\usepackage{amssymb}
\usepackage{amsmath}
\usepackage{amsthm}
\usepackage{latexsym}
\usepackage{amsfonts}
\usepackage{mathrsfs}
\usepackage{setspace}
\usepackage{xcolor}
\usepackage{MnSymbol}
\usepackage[colorlinks=true,linkcolor=blue,urlcolor=blue,bookmarksnumbered,citecolor=blue]{hyperref}

\allowdisplaybreaks

 \newtheorem{thm}{Theorem}[section]

 \newtheorem{prop}[thm]{Proposition}
 \theoremstyle{definition}
 \newtheorem{defn}[thm]{Definition}
 \theoremstyle{remark}
 \newtheorem{rem}[thm]{Remark}
 
 \numberwithin{equation}{section}

\numberwithin{equation}{section}
\let\oldboxplus\boxplus
\renewcommand{\boxplus}{%
  \mathrel{\raisebox{.5pt}{$\oldboxplus$}}%
}

\renewcommand{\emptyset}{\varnothing}

\newcommand{\R}{\ensuremath{\mathbb R}}    % Reelle Zahlen
\newcommand{\C}{\ensuremath{\mathbb C}}    % Komplexe Zahlen
\newcommand{\N}{\ensuremath{\mathbb N}}    % Nat"urliche Zahlen
\newcommand{\<}{\langle}
\renewcommand{\>}{\rangle}

\newcommand{\product}{[\cdot\,,\cdot]}
\newcommand{\hproduct}{(\cdot\,,\cdot)}

\newcommand{\aproduct}{\langle\cdot\,,\cdot\rangle}
\newcommand{\calH}{\mathcal H}

\newcommand{\calU}{\mathcal U}

\newcommand{\la}{\lambda}
\newcommand{\veps}{\varepsilon}

\newcommand{\mat}[4]
{
   \begin{pmatrix}
      #1 & #2\\
      #3 & #4
   \end{pmatrix}
}

\newcommand{\smallmat}[4]{\left(\begin{smallmatrix}#1 & #2\\#3 & #4\end{smallmatrix}\right)}

\renewcommand{\Im}{\operatorname{Im}}

\renewcommand{\ker}{\operatorname{ker}}
\newcommand{\ran}{\operatorname{ran}}
\newcommand{\dom}{\operatorname{dom}\,}

\newcommand{\sap}{\sigma_{{ap}}}

\renewcommand{\sp}{\sigma_{+}}
\newcommand{\sm}{\sigma_{-}}

\newcommand{\spp}{\sigma_+}
\newcommand{\smm}{\sigma_-}

\newcommand{\downto}{\downarrow}

\newcommand{\ol}{\overline}
\newcommand{\ds}{\dotplus}

\newcommand{\wh}{\widehat}

\newcommand{\sgn}{\operatorname{sgn}}
\newcommand{\dist}{\operatorname{dist}}

\title[On non-negative operators in Krein spaces and their perturbations]{On non-negative operators in Krein spaces and their perturbations}

\author{Jussi Behrndt}
\address{Jussi Behrndt: Institut f\"ur Angewandte Mathematik, Technische Universit\"at Graz,
Austria, Email: \textsc{behrndt@tugraz.at}.}%
\author{Friedrich M.~Philipp}
\address{Friedrich M. Philipp: Institute of Mathematics,
Technische Universit\"at Ilmenau, Germany, Email: \textsc{friedrich.philipp@tu-ilmenau.de}.}
\author{Carsten Trunk}
\address{Carsten Trunk: Institute of Mathematics,
Technische Universit\"at Ilmenau, Germany, Email:\textcolor{white}{hu }\textsc{carsten.trunk@tu-ilmenau.de}.}

\dedicatory{In memory of Heinz Langer – an outstanding mathematician and unique personality}

\begin{document}
\begin{abstract}
One of the most important contributions of Heinz Langer in the area of operator theory in Krein spaces is the introduction of the notion of definitizable operators
and the construction of the corresponding spectral function.
In this note we obtain a new characterization for the subclass of 
non-negative operators in Krein spaces which is based on local sign type properties of the spectrum and growth conditions on the resolvent. 
Based on these local properties, a notion of local non-negativity for self-adjoint operators in Krein spaces is defined and it is shown that such classes of operators appear naturally as perturbations of non-negative operators. 
\end{abstract}

%%% ----------------------------------------------------------------------
\maketitle
%%% ----------------------------------------------------------------------

\smallskip
\noindent \textbf{Keywords.} Krein space, definitizable operator, non-negative operator, locally non-negative operator, critical point, relatively bounded perturbation.

\smallskip
\noindent \textbf{Mathematics subject classications.} Primary 47B50; Secondary 46C20
%37M99, %
    %47B32, %
    %65C05, %
    %65D12 %

\bigskip

\section{Introduction}
\noindent Spectral theory of self-adjoint operators in Krein spaces is a challenging and advanced field in modern 
mathematical analysis with various applications to differential equations and mathematical physics, see, e.g., 
\cite{BC23,G12,GGH13,GGH15,J91,J93,J00,LNT06,LNT08,N80-1,N80-2,N83,V91} for the Klein-Gordon equation,
\cite{ALM01,AMT10,DL96,JT07,JTW08,LcMM,L73,L74,LLMT08,lmm,LMM06,LT04,LS17,M88} for operator pencils and related problems, 
\cite{B07-a,B13,bp,BPT28,BSTT24,bst19,cl,CN94,cn,DL77,DL86,F90,KK08,KKM09,KT09,kmwz,P95,P00,z} for ordinary and partial 
differential operators with indefinite weights,
and in the context of completeness of the system of eigenfunctions and associated functions of indefinite Sturm-Liouville problems, \cite{AP,Bea85,cu,CDT21,CFK13,cl,CN94,cn,Fl95,Fl98,Fl08,Fl15,Kost11,Kost13,Par03,Py3}.

Heinz Langer is one of the pioneers in this area and has shaped the field with his groundbreaking contributions going back to the habilitation thesis \cite{L65} in 1965, where the fundamental concept of definitizable self-adjoint operators in Krein spaces was introduced, and the spectral calculus for this important class of operators was developed.

Let us familiarize ourselves with the subject by starting with more special types of operators and then generalizing to the class of definitizable operators and beyond. In fact, first of all it is important to realize that an arbitrary self-adjoint operator $A$ with domain $\dom A$ in a Krein space $(\mathcal H,[\cdot,\cdot])$ is a very general object with little intrinsic structure: although its spectrum $\sigma(A)$ is symmetric with respect to the real axis, it need not be real and may even coincide with the entire complex plane.

Therefore, in order to establish a fruitful theory, additional conditions are needed. A particularly simple, but still natural and interesting case, appears if one assumes that the self-adjoint operator $A$ is 
{\it non-negative} with respect to the indefinite metric $[\cdot,\cdot]$, i.e.\ 
$$
[Af,f]\geq 0 \quad \mbox{for all }  f\in\dom A
$$ 
and the resolvent set $\rho(A)$ is non-empty. Such an operator admits a spectral function with possible singularities at the critical points $0$ and $\infty$,  the 
spectrum is contained in $\R$ and possesses some additional sign properties, namely, the spectral points in $(0,\infty)$ are of positive type and in $(-\infty,0)$ of negative type;
this goes back to \cite{KS66}, see also \cite{A79,b83,L82}. 
A straightforward generalization of the class of non-negative operators are operators with {\it finitely many negative squares}, that is, the form $[A\cdot,\cdot]$ is non-negative only on a 
subspace of $\dom A$ with finite codimension. 
The next step brings us to the class of definitizable operators and the fundamental 
contributions to operator theory in Krein spaces by Heinz Langer in the 1960s. Recall that a self-adjoint operator $A$ in a
Krein space $(\mathcal H,[\cdot,\cdot])$ is \textit{definitizable} if there exists a real polynomial $p\not= 0$ 
such that 
$$
[p(A)f,f]\geq 0 \quad  \mbox{for all } f\in\dom p(A)
$$
and the resolvent set $\rho(A)$ is non-empty.
Such an operator admits a spectral function, and with the help of this spectral function the real points of the spectrum $\sigma(A)$ can be classified in points of positive and negative type, and a finite set of critical points. Furthermore, the non-real spectrum consists of at most finitely many pairs of eigenvalues (which are symmetric with respect to the real line) and the growth of the resolvent of $A$ towards  real points is of finite order.
The classical paper \cite{L82} is an excellent source for an introduction into the theory of definitizable operators and their applications, which also provides the derivation and the construction of the spectral function; we also refer the reader to the very recent monograph \cite{G22} for more details and further references. 
Another substantial major step forward was taken in the paper \cite{lmm}, where spectral points of positive and negative type for self-adjoint operators in Krein spaces were characterized via approximative eigensequences (see Definition~\ref{d:spp}), which paved the way to local spectral analysis and will play an essential role here.

The main objective of this note is to view non-negative self-adjoint operators in Krein spaces in a local spirit. In this context we provide in Section 3 a complete characterization of non-negative operators via local spectral properties and resolvent growth conditions near $0$ and $\infty$, as well as an additional non-negativity condition related to the spectral point $0$; cf. Theorem~\ref{t:nn}. To the best of our knowledge such an explicit characterization focusing on local spectral properties is not contained in the mathematical literature. In the case that $0$ is not a singular critical point, our conditions simplify and actually reduce to a non-negativity condition for the root vectors in Theorem~\ref{t:nn2}. 
If, in addition, $\infty$ is also not a singular critical point and the geometric and algebraic eigenspaces at $0$ coincide, that is, $\ker A=\ker A^2$, then $A$ is similar to a self-adjoint operator in a Hilbert space; cf. Theorem~\ref{t:nn3}. This situation is often treated in the mathematical literature in the context of differential operators.

Our point of view is particularly convenient for perturbation problems. More precisely, it is clear that non-negativity is not stable with respect to 
(relatively) bounded additive perturbations, but intuitively one may still expect some similar spectral behaviour outside sufficiently large compact sets in $\mathbb C$ if the perturbation is bounded or at least relatively bounded in a suitable sense. In fact, this naturally leads to classes of self-adjoint operators in Krein spaces that are non-negative outside a compact set, and these operators can
be characterized in a convenient way by our local analysis; cf.\ Section 4 for more details.

In Section~\ref{555} we then interpret and illustrate the concept of local non-ne\-ga\-ti\-vi\-ty in the context of perturbation theory for self-adjoint operators in Krein spaces. E.g., the simplest and well studied case is when the unperturbed operator $A$ is fundamentally reducible (that is, $A$ commutes with some fundamental symmetry)
and the additive perturbation $V$ is bounded and self-adjoint.
Then  Theorem~\ref{l:lmm} in Section~\ref{555}
states that $A+V$ is locally non-negative outside 
a capsule-shaped region around zero, where the shape
is determined by $V$, for details see Section~\ref{555}.
This type of results have a long history and go back (at least) to the paper \cite{L67} by Heinz Langer, see also \cite{LLMT05,lmm,T08,T09} and \cite{BPT28,GLMPT20,p23} for more recent generalizations. However, the fundamental decomposition that reduces $A$ is typically unknown in applications, and more advanced estimates and techniques are required to obtain the perturbation results we present here as Theorem \ref{t:main1} and Theorem \ref{t:caot_main} which are taken from \cite{BPT28} and \cite{p23}, respectively.
Such abstract perturbation results can be applied, e.g. 
to singular Sturm-Liouville operators
with indefinite weight functions and $L^p$-potentials; cf. \cite{BPT28,p23}
for more details.

\bigskip\noindent
\textbf{Notation.}
$\mathbb C^+$ ($\mathbb C^-$) denotes the open upper half-plane (the open lower half-plane, respectively) and  $\mathbb R^+$ ( $\mathbb R^-$) the set of positive (negative,  respectively) real numbers, i.e.\ $\mathbb R^+:= (0,\infty)$ and $\mathbb R^-:=(-\infty , 0)$.  The compactification $\mathbb R \cup \{\infty\}$ of $\mathbb R$ is denoted by $\overline{\mathbb R}$ and
the compactification $\mathbb C \cup \{\infty\}$ of $\mathbb C$ by $\overline{\mathbb C}$. 

Let $T$ be a linear operator in a Hilbert space.
The domain, kernel, and range of $T$ will be denoted by $\dom T$, $\ker T$, and $\ran T$, respectively.
For a closed linear operator $T$, we denote the spectrum by $\sigma(T)$ and the resolvent set by 
$\rho(T)$. 
A point $\lambda\in\C$ belongs to the {\it approximate point spectrum} $\sap(T)$ of $T$ if there exists a sequence $(f_n)_{n\in\N}$ in $\dom T$ with $\|f_n\| = 1$ for $n\in\N$ and $(T - \la)f_n\to 0$ as $n\to\infty$. Note that both the point spectrum and the continuous spectrum of $T$ are contained in $\sap(T)$.

\bigskip\noindent
\textbf{Acknowledgments.}
This research was funded in part by the Austrian Science Fund (FWF) 10.55776/P 33568-N. FMP gratefully acknowledges support from the German Research Foundation (DFG), project numbers 554600805 and 519323897.% The authors thank the referee for a very careful reading of the manuscript.
%For the purpose of open access, the authors have applied a CC BY public copyright license to any Author Accepted Manuscript version arising from this submission.

\section{Preliminaries on self-adjoint operators in Krein spaces}

Let $(\calH,\product)$ be a Krein space, let $J$ be a fixed fundamental symmetry in $\calH$, and denote by $\hproduct$ the Hilbert space scalar product induced by $J$, i.e.\ $\hproduct = [J\,\cdot\,,\cdot]$. The induced norm is denoted by $\|\cdot\|$.
 For a detailed treatment of Krein spaces and operators therein we refer to the monographs \cite{ai,b,G22}.

For a densely defined linear operator $A$ in $\calH$ the adjoint with respect to the Krein space inner product $[\cdot,\cdot]$ is denoted by $A^+$. We mention that $A^+ = JA^*J$, where $A^*$ denotes the adjoint of $A$ with respect to the scalar product $\hproduct$. The operator $A$ is called {\it symmetric {\rm (}self-adjoint{\rm )} in the Krein space $(\calH,\product)$} if $A\subset A^+$ ($A = A^+$, respectively). Occasionally, we write $\product$-symmetric or $\product$-self-adjoint. The latter is equivalent to self-adjointness of the operator $JA$ in the Hilbert space $(\calH,\hproduct)$.

As mentioned in the Introduction, the spectrum of a self-adjoint operator $A$ in a Krein space is symmetric with respect to the real axis, but is in general not contained in $\R$. Furthermore, it is known that its real spectral points belong to the approximate point spectrum $\sap(A)$ (see, e.g., \cite[Corollary~VI.6.2]{b}):
\begin{equation}\label{e:Rsssap}
\sigma(A)\cap\R\subset\sap(A).
\end{equation}
The following definition can be found in, e.g., \cite{j03,LcMM,lmm}.

\begin{defn}\label{d:spp}
Let $A$ be a self-adjoint operator in the Krein space $(\calH,\product)$. A point $\la\in\sap(A)$ is called a {\em spectral point of positive {\rm (}negative{\rm )} type} of $A$ if for every sequence $(f_n)$ in $\dom A$ with $\|f_n\| = 1$ and $(A - \la)f_n\to 0$ as $n\to\infty$ we have
$$
\liminf_{n\to\infty}\,[f_n,f_n] > 0\quad\Big(\limsup_{n\to\infty}\,[f_n,f_n] < 0,\;\text{respectively}\Big).
$$
The set of all spectral points of positive {\rm (}negative{\rm )} type of $A$ will be denoted by $\spp(A)$ {\rm (}$\smm(A)$, respectively{\rm )}. A set $\Delta\subset\C$ is said to be of {\em positive {\rm (}negative{\rm )} type} with respect to $A$ if each spectral point of $A$ in $\Delta$ is of positive type {\rm (}negative type, respectively{\rm )}, and it is called of {\em definite type} with respect to $A$ if it is either of positive or of negative type.
\end{defn}

The sets $\spp(A)$ and $\smm(A)$ are contained in $\R$, open in $\sigma(A)$ and the non-real spectrum of $A$ cannot accumulate to $\spp(A)\cup\smm(A)$,
see \cite{ajt,j03,lmm}. At a spectral point $\la_0$ of positive or negative type of a self-adjoint operator $A$ in a Krein space the growth of the resolvent of $A$ is of order one in the sense of the following definition; cf.\ \cite{ajt,j03,lmm}.

\begin{defn}\label{d:growth}
Let $A$ be a self-adjoint operator in the Krein space $(\calH,\product)$.
\begin{itemize}
\item[\rm (i)] We say that the growth of the resolvent of $A$ at $\la_0\in \mathbb R$ is of order $m\ge 1$ if there exist an open neighborhood $\calU \subset \mathbb C$ of $\la_0$  and $M > 0$ such that $\calU\setminus\R\subset\rho(A)$ and
\begin{equation}\label{e:growth}
\|(A - \lambda)^{-1}\| \le \frac{M}{|\Im\la|^{m}},
\quad \la\in\calU\setminus\mathbb R.
\end{equation}
\item[\rm (ii)] We say that the growth of the resolvent of $A$ at $\infty$ is of order $m\ge 1$ if there exist an open neighborhood $\calU$ of $\infty$ in $\ol\C$ and $M > 0$ such that $\calU\setminus\ol\R\subset\rho(A)$ and
\begin{equation}\label{e:growth2}
\|(A - \lambda)^{-1}\| \le 
\frac{M |\la|^{2m-2}}{|\Im\la|^{m}}, \quad \la\in\calU\setminus\ol\R.
\end{equation}
\end{itemize}
\end{defn}

It is clear that if the growth of the resolvent of $A$ at some point $\lambda_0 \in \ol{\mathbb R}$ is of order $m$, then it is also of order
 $n$ for $n>m$.

\begin{rem}
Typically, the two separate resolvent growth conditions \eqref{e:growth} and \eqref{e:growth2} appear in 
the literature in the summarized form
\begin{equation*}
\|(A - \lambda)^{-1}\| \le 
M\frac{(1+ |\la|)^{2m-2}}{|\Im\la|^{m}}, \quad \la\in\calU\setminus\ol\R,
\end{equation*}
see, e.g. \cite{J88,J91,j03}. However, for our purposes it is slightly more convenient to treat
finite points and $\infty$ separately as in
\eqref{e:growth} and \eqref{e:growth2}.
\end{rem}

We briefly recall the notion of locally
definitizable self-adjoint operators.
Such classes of operators appeared first in
a paper by Langer in 1967 (see \cite{L67})  without having a name
at that time. Later, in a series of papers, Jonas studied these
operators and introduced the notion of locally definitizable
operators; cf.\ \cite{J86,J88,J91,j03}.
In the following definition let
$\Omega$ be some domain in $\overline{\mathbb C}$ symmetric with respect
to the real axis such that $\Omega\cap\overline{\mathbb R}\not=\emptyset$
and the intersections of $\Omega$ with the upper and lower open
half-planes are simply connected.

\begin{defn}\label{locdef}
Let $A$ be a self-adjoint operator in the Krein space $(\calH,\product)$ such
that $\sigma(A)\cap(\Omega \backslash\overline{\mathbb R})$ consists of
isolated points which are poles of the resolvent of $A$, and no
point of $\Omega\cap\overline{\mathbb R}$ is an accumulation point of the
non-real spectrum of $A$ in $\Omega$. The operator $A$ is said
to be {\it definitizable over} $\Omega$, if the following holds.
\begin{itemize}
\item[{\rm (i)}]
Every point $\mu\in\Omega\cap\overline{\mathbb R}$ has an open connected
neighborhood $I_\mu$ in $\overline{\mathbb R}$ such that both components of
$I_\mu\backslash\{\mu\}$ are of definite type with respect to $A$, respectively.
\item[{\rm (ii)}]
For every $\lambda_0 \in \Omega \cap \overline{\mathbb R}$ the growth of
the resolvent of $A$ is of finite order.
\end{itemize} 
The points in $\sigma(A)\cap\Omega\cap\R$ that do not belong to $\sp(A)\cup\sm(A)$ are called  
{\em critical points} of $A$ in $\Omega$. If $\infty\in\Omega$, then $\infty$ is called
{\em critical point} of $A$ if both $\sigma_+(A)$ and $\sigma_-(A)$ 
accumulate at $\infty$, and
one component of $I_\infty\backslash\{\infty\}$
is of positive type, and the other component of $I_\infty\backslash\{\infty\}$ is of negative type 
with respect to $A$.
\end{defn}

Let $A$ be definitizable over $\Omega$. Then $A$ possesses a
\emph{local spectral function} $\Delta\mapsto E(\Delta)$ on
$\Omega\cap\overline{\mathbb R}$, which is defined for all Borel subsets $\Delta$ of $\Omega\cap\overline{\mathbb R}$ whose $\ol\R$-boundary points are of definite type with respect to $A$ and belong to $\Omega\cap\overline{\mathbb R}$, \cite[Section 3.4 and Remark 4.9]{j03}. For such a set $\Delta$ we collect some properties of $E(\Delta)$ in the following theorem, see \cite[Section 3.4 and Remark 4.9]{j03}.

\begin{thm}\label{locspecfunc}
Let $A$ be a self-adjoint operator in the Krein space $(\calH,\product)$,
assume that $A$ is definitizable over $\Omega$, and let 
$\Delta\mapsto E(\Delta)$ be the local spectral function on
$\Omega\cap\overline{\mathbb R}$.
Then the spectral projection $E(\Delta)$ is a bounded $\product$-self-adjoint projection
with the following properties:
\begin{enumerate}
\item[{\rm (a)}] $E(\Delta)$  commutes
with every bounded operator which commutes with the resolvent of $A$.
\item[{\rm (b)}] $\sigma(A|E(\Delta)\calH)\,\subset\,\sigma(A)\cap\ol\Delta$.
\item[{\rm (c)}] 
If $\overline\Delta$ is of positive type, then $(E(\Delta)\calH,\product)$ is a Hilbert space. Similarly, if $\overline{\Delta}$ is of negative
type, then $(E(\Delta)\calH,-\product)$ is a Hilbert space. 
\item[{\rm (d)}] 
$\sigma(A|(I - E(\Delta))\calH)\,\subset\,\sigma(A)\setminus\operatorname{int}(\Delta)$, where
$\operatorname{int}(\Delta)$ denotes the interior of $\Delta$ with respect to the topology of
$\ol\R$.
\item[{\rm (e)}]  If, in addition, $\Delta$ is a neighborhood of $\infty$
{\rm (}with respect to the topology of $\ol\R${\rm )}, then $A|(I-E(\Delta)) \calH$ is a bounded operator.
\end{enumerate}
\end{thm}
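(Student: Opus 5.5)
The statement collects properties of Jonas' local spectral function, so the plan is to reconstruct it in two steps: first build $E(\Delta)$ from the resolvent in a form that makes the algebraic properties automatic, and then use the sign properties at the boundary points of $\Delta$ for the geometric properties. I would take the $\ol\R$-boundary points of $\Delta$ to be finitely many points of definite type (this is the situation used later in the paper). Around each such point $\mu$ I fix a small interval $I_\mu$ of definite type, which is possible because $\sp(A)\cup\sm(A)$ is open in $\sigma(A)$. Since non-real spectrum cannot accumulate at $I_\mu$ and the resolvent has growth of order one there, $A$ restricted to a neighbourhood of $I_\mu$ behaves like a definitizable operator with definitizing polynomial $\pm1$.

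The first step is the construction of $E(\Delta)$. From the local theory of \cite{lmm} I would take, for the interval $I_\mu$, a local spectral function: a $\product$-self-adjoint projection obtained as the strong limit of the Stieltjes-type integrals
$$-\tfrac1{2\pi i}\int_{\Gamma_\varepsilon}(A-\la)^{-1}\,d\la,$$
where $\Gamma_\varepsilon$ is a contour crossing $\R$ at the endpoints of $I_\mu$ at distance $\varepsilon$ from the axis. Order-one growth plus definiteness of the type make these limits exist. Away from the real boundary points, $\Delta$ differs from a set bounded by a contour in $\Omega\setminus\ol\R$, which avoids the finitely many non-real poles, so the remaining part is an ordinary Riesz projection. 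Since $A$ is definitizable over $\Omega$, inside the region cut out by such contours it is definitizable in the classical sense, and Langer's spectral function \cite{L82} provides the projections for arbitrary Borel pieces. I would then glue the local pieces using disjointness and commutativity, and check independence of all choices.

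The second step is the properties. For (a), every constituent of $E(\Delta)$ is a strong limit of integrals of $(A-\la)^{-1}$, so any bounded operator commuting with the resolvent commutes with $E(\Delta)$. For (b) and (d), I would write the resolvent of $A|E(\Delta)\calH$ and of $A|(I-E(\Delta))\calH$ as integrals over the same contours, for instance via $(A-\la)^{-1}E(\Delta)$ expressed through $(\la'-\la)^{-1}(A-\la')^{-1}$. These expressions extend holomorphically off $\sigma(A)\cap\ol\Delta$ and off $\sigma(A)\setminus\operatorname{int}\Delta$, respectively, and the usual identities show they are inverses. For (c), if $\ol\Delta$ is of positive type, then every spectral point of $B:=A|E(\Delta)\calH$ is of positive type, and $\sigma(B)$ is real and compact in $\ol\R$, hence covered by finitely many positive-type intervals. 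A compactness argument with approximate eigensequences from Definition~\ref{d:spp} then gives $[f,f]\ge c\|f\|^2$ on $E(\Delta)\calH$. This is the standard argument of \cite{lmm,j03}: otherwise a normalized sequence with $[f_n,f_n]\to0$ yields, via the spectral function of the positive-type intervals, a point that is not of positive type. For (e), the set $\ol\R\setminus\Delta$ is compact in $\R$, so $I-E(\Delta)$ is a contour integral around a bounded set. Its range then lies in $\dom A$, and $A(I-E(\Delta))=-\frac1{2\pi i}\int \la(A-\la)^{-1}d\la$ is bounded.

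The main obstacle is the first step: defining the projection when the contour must cross $\R$ at boundary points that are themselves spectral points, as opposed to points of the resolvent set. This requires the order-one resolvent estimate together with the definite-type sign condition, as in \cite{lmm}, to pass to the limit $\varepsilon\to0$ and to see that no mass concentrates at the boundary point. I would first reduce this to the model case of a bounded operator that is non-negative in a Krein space, where the limit is classical.
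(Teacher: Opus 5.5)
The paper does not prove this theorem; it quotes it from \cite[Section~3.4 and Remark~4.9]{j03}. Your roadmap (split off definite-type pieces at the boundary points, treat the rest by Riesz projections, glue) is a reasonable way to organize such a theory. As a proof, however, it has a genuine gap exactly at the step you call the main obstacle.

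Everything rests on the existence of a local spectral function $E(I_\mu)$ on an interval $I_\mu$ of definite type that meets $\sigma(A)$, together with the uniform definiteness of $(E(I_\mu)\calH,\pm\product)$. Your sketch does not produce it. The order-one estimate cannot do so on its own. The bound $\|(A-\la)^{-1}\|\le M|\Im\la|^{-1}$ is not integrable along a path crossing $\R$ at a spectral point, so it gives no control of $\int_{\Gamma_\veps}(A-\la)^{-1}\,d\la$ as $\veps\downto0$. The sign condition has to enter through a positivity argument, and that argument is missing.

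One way to supply it: the identity $\Im[(A-\la)^{-1}x,x]=\Im\la\,[(A-\la)^{-1}x,(A-\la)^{-1}x]$, together with a uniform version of Definition~\ref{d:spp} for $\la$ in a complex neighbourhood $U$ of a compact interval of positive type, gives $\Im\big([(A-\la)^{-1}x,x]+C\la\|x\|^2\big)\ge0$ on $U\cap\C^+$. Stieltjes inversion of these functions then yields positive measures. In one form or another, this is the real content of \cite{lmm,j03}.

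Your proposed reduction to a bounded non-negative operator is circular, since isolating such a piece of $A$ already requires $E(I_\mu)$. The same problem affects two of the properties.
\begin{itemize}
\item[(c)] A normalized sequence with $[f_n,f_n]\to0$ is not an approximate eigensequence. To turn it into one by splitting with projections $E(\delta_j)$ of short intervals, you need $\|(A-t_j)E(\delta_j)\|$ small and $\|f\|^2\le C\sum_j\|E(\delta_j)f\|^2$, both uniformly. In a Krein space a small spectral radius says nothing about the norm, and these two bounds are essentially the uniform definiteness you are trying to prove.
\item[(e)] $I-E(\Delta)$ is not a contour integral when $\partial\Delta$ meets $\sigma(A)$. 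Spectral information alone does not help, because a self-adjoint operator in a Krein space with bounded, even empty, spectrum can be unbounded. Boundedness follows only once the pieces $E(I_\mu)$ have been split off, so that what remains is a genuine Riesz projection along a contour in the resolvent set of the remaining operator.
\end{itemize}

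Your other ingredient, that the part of $A$ cut out by the contours is definitizable in Langer's sense, is only asserted. It amounts to ``definitizable over $\ol\C$ implies definitizable'' \cite[Theorem~4.7]{j03}, a deep result, and using it inside the construction of $E$ risks circularity. It is also not needed for (a)--(e).

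A critical point is never an $\ol\R$-boundary point of $\Delta$. So each critical point in $\ol\Delta$ lies in the interior of $\Delta$ and can be enclosed in an interval $[c-\delta,c+\delta]\subset\Delta$, or a neighbourhood $\ol\R\setminus(-R,R)$ of $\infty$, whose endpoints are of definite type. The projection for such a piece is the sum of two parts: the definite-type pieces at the endpoints, and a Riesz projection of the remainder along a symmetric contour kept close to $\R$, so that it encloses no non-real spectrum. The rest of $\Delta$ lies in finitely many intervals $I$ whose closures are of definite type. There the Hilbert space spectral measure of $A|E(I)\calH$ handles arbitrary Borel subsets.

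The same compactness argument removes your assumption that $\partial\Delta$ is finite. That assumption is not the situation in the paper; compare $E(\ol\R\setminus\calU)$ in the proof of Theorem~\ref{t:eqthm}.

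So the whole statement reduces to the one result you have not established: the local spectral function, with property (c), on intervals of definite type. This includes neighbourhoods of $\infty$, where \cite{lmm}, which treats bounded operators, does not apply directly. Supplying that result is precisely what \cite{j03} does, and it is what the paper relies on by citation.
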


The local spectral function of a definitizable operator 
$A$ over $\Omega$ allows forn an equivalent definition of spectral points of definite type which is used in \cite{L82}:
A real point $\lambda \in \sigma(A)\cap\Omega$
belongs to $\sigma_+(A)$ ($\sigma_-(A)$) if and only if there exists an open interval $\Delta\subset \mathbb R$,
$\lambda \in \Delta$,
such that $E(\Delta)$ is defined and $(E(\Delta)\calH, \product)$ (resp., $(E(\Delta)\calH, -\product))$ is a Hilbert space, see, e.g.,  \cite[Proposition 25]{ajt}.

In the sequel we shall often view restrictions 
$$A'=A|E(\Delta)\calH\qquad \text{and}\qquad A''=A|(I-E(\Delta))\calH $$ 
of a locally definitizable operator $A$ to spectral subspaces as operators acting in these subspaces, in which case we write 
$$
A=\begin{pmatrix} A' & 0 \\ 0 & A''\end{pmatrix},\qquad \calH=E(\Delta)\calH\,[\ds]\,(I-E(\Delta))\calH;
$$
however, sometimes it is also convenient to interpret restrictions of $A$ as operators in the same space $\calH$, so that, e.g. the sum $A=A|E(\Delta)\calH + A|(I-E(\Delta)) \calH$ is well-defined.

Next, we classify the critical points of $A$ in $\Omega$
%into
%points in $\sigma(A)\cap\Omega\cap\ol\R$ which are not in $\sp(A)\cup\sm(A)$ 
%as so-called 
as {\it regular} or {\it singular}.% critical points.

\begin{defn}
Let $A$ be a self-adjoint operator in the Krein space $(\calH,\product)$,
assume that $A$ is definitizable over $\Omega$, and let 
$\Delta\mapsto E(\Delta)$ be the local spectral function on
$\Omega\cap\overline{\mathbb R}$. 
A critical point $\mu\in\Omega\cap\R$ is called {\it regular} if there exists $C > 0$ 
such that $\|E([\mu-\veps,\mu+\veps])\|\le C$ 
for all sufficiently small $\veps > 0$. 
If $\infty\in\Omega$ is a critical point, then $\infty$ is called {\it regular} if there exists $C > 0$ 
such that  $\|E(\overline{\mathbb R}\setminus [-r,r])\|\le C$  for all sufficiently large $r > 0$. 
Critical points, which are not regular, are called {\it singular}.
\end{defn}

By \cite[Theorem 4.7]{j03}, a self-adjoint operator $A$ is
definitizable over $\overline{\mathbb C}$ if and only if $A$ is {\it
definitizable} in the classical sense of Heinz Langer \cite{L65,L82}, that is, the resolvent set 
$\rho(A)$ of $A$ is non-empty and
there exists a real polynomial $p\not= 0$  such that $p(A)$ is a non-negative operator
in $\mathcal H$, i.e.,
\begin{equation}\label{pa}
[p(A)f,f] \geq 0\quad  \mbox{for all } f\in\dom p(A).
\end{equation}
It was already shown in \cite{L65,L82} (see also \cite{b87}) that a definitizable operator possesses a spectral function on $\overline{\mathbb R}$ with a possible finite set of singularities (which are the singular critical points), and the spectral projections are defined for all Borel subsets $\Delta$ of $\overline{\mathbb R}$
whose $\ol\R$-boundary points are of definite type (see also \cite[Chapter~11]{G22}). This spectral function coincides with the (local) spectral function mentioned above, and hence has the properties in Theorem~\ref{locspecfunc}.

\section{Spectral characterization of non-negative operators}
\label{Grunewald}

A self-adjoint operator $A$ in a Krein space $(\calH, \product)$ is said to be {\it non-negative} if  $\rho(A)\neq\emptyset$ and 
\begin{equation}\label{ojabitte3}
[A f,f] \geq 0,\quad  \mbox{for all } f\in\dom A.
\end{equation}
If, for some $\gamma > 0$, $[Af,f]\geq\gamma\|f\|^2$ holds for all $f\in\dom A$, then $A$ is called  {\it uniformly positive}. Note that a self-adjoint operator $A$  is uniformly positive if and only if $A$ is non-negative and $0\in\rho(A)$.

The next theorem characterizes non-negativity of a self-adjoint operator in a Krein space in terms of its local spectral properties. The interesting and new observation here is the sufficiency of the conditions (i)--(iii) below for $A$ to be non-negative; their necessity is known. In fact, 
as $A$ is definitizable with definitizing polynomial $p(\xi)=\xi$, $\xi\in\R$, (see \eqref{pa}) it possesses a spectral function $E$ on $\ol \R$ as in Theorem \ref{locspecfunc} and 
(i)--(iii)  can be observed as special cases of properties of definitizable operators from \cite{L82} and the spectral points of definite type \cite{lmm} together with \cite[Proposition 25]{ajt};
note also that the only possible critical points of $A$ are $0$ and $\infty$ (see \cite{A79,J81,KS66,L65} for more details). However, there exists also 
an elementary direct proof for the necessity part, which we will present here.

\begin{thm}\label{t:nn}
A self-adjoint operator $A$ in the Krein space $(\calH,\product)$ is non-negative if and only if the following conditions are satisfied:
\begin{enumerate}
\item[{\rm (i)}]   $\sigma(A)\subset\R$ and $\sigma(A)\cap\R^\pm\subset\sigma_\pm(A)$.
\item[{\rm (ii)}]  The growth of the resolvent of $A$ at $\infty$ is of order  $2$.
\item[{\rm (iii)}] The growth of the resolvent of $A$ at $0$ is of order $2$, and for each sequence $(f_n)$ in $\dom(A^2)$ with $A^2f_n/\|f_n\|\to 0$ as $n\to\infty$ we have
$$
\liminf_{n\to\infty}\,[Af_n,f_n]\ge 0.
$$
\end{enumerate}
\end{thm}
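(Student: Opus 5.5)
We prove necessity by an elementary direct argument and sufficiency by first showing that $A$ is definitizable over $\ol\C$, then splitting off the spectrum near $0$ and near $\infty$ with the local spectral function. The key tool for necessity is the Cauchy--Schwarz inequality for the non-negative form $[A\cdot,\cdot]$:
$$
|[Af,g]|^2\le [Af,f]\,[Ag,g],\qquad f,g\in\dom A.
$$

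\emph{Necessity.} Let $A$ be non-negative. For non-real $\la$ we take imaginary and real parts of $[(A-\la)f,f]$. This gives
$$
|\Im\la|\,|[f,f]|\le\|(A-\la)f\|\,\|f\|,\qquad [Af,f]\le \|(A-\la)f\|\,\|f\|+|\Re\la|\,|[f,f]|.
$$
Inserting the Cauchy--Schwarz inequality with $g=Jf$ type test vectors should yield a bound $\|f\|\le C(\la)\|(A-\la)f\|$ for non-real $\la$. Combined with $\rho(A)\neq\emptyset$ and the symmetry of $\sigma(A)$ with respect to $\R$, this gives $\sigma(A)\subset\R$. Tracking the constants in the same estimate should produce the bounds $M|\la|^2/|\Im\la|^2$ at $\infty$ and $M/|\Im\la|^2$ at $0$, i.e.\ (ii) and the growth part of (iii).

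The sign type in (i) is read off directly. If $\la>0$, $\|f_n\|=1$ and $(A-\la)f_n\to0$, then $\la[f_n,f_n]=[Af_n,f_n]-[(A-\la)f_n,f_n]$. The first term is $\ge0$ and the second tends to $0$, so $\liminf[f_n,f_n]\ge0$. Strictness comes from Cauchy--Schwarz: if $[Af_n,f_n]\to0$, then $[Af_n,g]\to0$ for all $g$, hence $Af_n\rightharpoonup0$. But $Af_n-\la f_n\to0$, so $f_n\rightharpoonup0$, and a second application of Cauchy--Schwarz with $g=Af_n$ gives a contradiction with $\|f_n\|=1$. The case $\la<0$ is symmetric. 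The liminf condition in (iii) is trivial, since $[Af_n,f_n]\ge0$ for every $n$.

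\emph{Sufficiency.} Conditions (i)--(iii) say exactly that $A$ is definitizable over $\Omega=\ol\C$ in the sense of Definition~\ref{locdef}:
\begin{itemize}
\item the only candidates for critical points are $0$ and $\infty$, and both sides of each are of definite type;
\item the resolvent growth is of order $1$ at points of definite type and of order $2$ at $0$ and $\infty$;
\item there is no non-real spectrum.
\end{itemize}
Hence $A$ has a spectral function $E$ as in Theorem~\ref{locspecfunc}. We choose $0<\veps<1<r$ such that $\pm\veps,\pm r$ are in $\rho(A)$ or of definite type, and decompose
$$
\calH=E([-\veps,\veps])\calH\,[\ds]\,E(\Delta_+)\calH\,[\ds]\,E(\Delta_-)\calH\,[\ds]\,E(\ol\R\setminus[-r,r])\calH,
$$
where $\Delta_\pm=\pm[\veps,r]\setminus\{\pm\veps,\pm r\}$ suitably. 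Since $E$ is $\product$-self-adjoint and commutes with $A$, for $f\in\dom A$ the form $[Af,f]$ splits into the sum of the four diagonal terms.

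On $E(\Delta_+)\calH$ the space is a Hilbert space for $\product$ by Theorem~\ref{locspecfunc}(c), and $A$ restricted there is self-adjoint with spectrum in $[\veps,r]$, so that term is $\ge0$. On $E(\Delta_-)\calH$ the form $-\product$ is a Hilbert inner product and the spectrum lies in $[-r,-\veps]$, so $[Af,f]=(-A f,f)_{-}\ge0$ there as well.

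The remaining task is to show that the two terms
$$
[Ah_\veps,h_\veps],\quad h_\veps=E([-\veps,\veps])f,\qquad\text{and}\qquad [Ak_r,k_r],\quad k_r=E(\ol\R\setminus[-r,r])f,
$$
have non-negative $\liminf$ as $\veps\to0$ and $r\to\infty$. Since their sum with a non-negative quantity equals the fixed number $[Af,f]$, this gives $[Af,f]\ge0$.

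For the term near $0$ we use the order-$2$ growth in a Riesz--Dunford representation of $A^2E([-\veps,\veps])$ over a contour surrounding $[-\veps,\veps]$. The aim is to show $\|A^2h_\veps\|/\|h_\veps\|\to0$ along a subsequence. There are two cases:
\begin{itemize}
\item if $\|h_\veps\|\to0$, then $[Ah_\veps,h_\veps]\to0$;
\item otherwise the hypothesis in (iii), applied to $h_\veps$, gives $\liminf[Ah_\veps,h_\veps]\ge0$, because $[Ah_\veps,h_\veps]$ is bounded above by $[Af,f]$.
\end{itemize}
For the term at $\infty$ we pass to the bounded inverse $B=(A|E(\ol\R\setminus[-1,1])\calH)^{-1}$. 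It is $\product$-self-adjoint, has spectrum in $[-1,1]$, $(0,1]$ is of positive type and $[-1,0)$ of negative type, and its resolvent has order-$2$ growth at $0$. Since $[Ak,k]=[Bg,g]$ with $g=Ak$, it suffices to show that $B$ is non-negative.

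\emph{Main obstacle.} The hardest step is the local analysis near the (possibly singular) critical point, where $\|E([-\veps,\veps])\|$ may be unbounded. Two estimates are needed. First, from the order-$2$ resolvent bound one must extract the quantitative estimate $\|A^2E([-\veps,\veps])f\|=o(\|E([-\veps,\veps])f\|)$. Second, at $\infty$ one must show that the analogue of the liminf condition holds automatically for $B$. For the latter the plan is to use that $B=A^{-1}$ on this subspace, so that $B^2g_n/\|g_n\|\to0$ can be translated into information about $A$ on the vectors $A^{-2}g_n$. The contour integral must be routed through points of definite type on $\R$, where the resolvent growth is of order one, and this may require choosing $\veps$ and $r$ along suitable sequences.
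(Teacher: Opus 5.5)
Your sufficiency argument has a genuine gap at $\infty$. You correctly reduce that part to showing that $B=(A|E(\ol\R\setminus[-1,1])\calH)^{-1}$ is non-negative. Your plan is to rerun the near-$0$ argument for $B$, which needs the liminf condition of (iii) for $B$, and the way you propose to obtain it does not work. If $B^2g_n/\|g_n\|\to0$ and $f_n:=B^2g_n$, then $[Bg_n,g_n]=[A(Af_n),Af_n]$ with $\|A^2f_n\|/\|f_n\|\to\infty$. That is a statement about $A$ near $\infty$, and (i)--(iii) say nothing about such sequences: the liminf hypothesis lives only at $0$. It is genuinely needed there, since e.g.\ $-\smallmat0100$ with $[x,y]=x_1\ol{y_2}+x_2\ol{y_1}$ satisfies everything except that condition.

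What actually makes the part at $\infty$ non-negative is that $B$ is injective, $0\notin\sigma_p(B)$, combined with $\sigma(B)\cap(0,1]\subset\spp(B)$ and $\sigma(B)\cap[-1,0)\subset\smm(B)$. You only assert these sign types; the paper obtains them from \cite[Lemma 2.4]{j03}. Turning injectivity into $[Bg,g]\ge0$ is not elementary when $0$ is a singular critical point of $B$. You would have to show $[BE_B([-\veps,\veps])g,g]\to0$ with no control on $\|E_B([-\veps,\veps])\|$, and nothing in your plan does this. The paper takes it from \cite[Corollary 3 of Proposition II.5.2]{L82}.

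The key estimate at $0$ is also left open. You need $\|(A|E([-\veps,\veps])\calH)^2\|\to0$. Once you have it, (iii) applies to $h_\veps$ directly, and your case distinction and the upper bound by $[Af,f]$ are unnecessary. Routing a Riesz--Dunford contour through points of definite type does not help, because order-one growth $M/|\Im\la|$ is still not integrable across $\R$. Instead, work with the restriction $T_\veps=A|E([-\veps,\veps])\calH$. For non-real $\la$ its resolvent satisfies $\|(T_\veps-\la)^{-1}\|\le\|(A-\la)^{-1}\|\le M/|\Im\la|^2$, uniformly in $\veps$. The paper then uses \cite[Lemma 2.5]{PST12}, $\|T^2\|\le4(M+\|T\|)r(T)$. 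Alternatively, integrate $\la^2(T_\veps-\la)^{-1}$ over $|\la|=2\veps$. On this circle the resolvent is bounded by $CM\veps^{-2}$; near $\pm2\veps$ this follows from the maximum principle applied to $((\la\mp2\veps)^2-\veps^2/4)^2(T_\veps-\la)^{-1}$ on $|\la\mp2\veps|\le\veps/2$. This gives $\|T_\veps^2\|=O(\veps)$.

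Your necessity argument takes a different and more elementary route than the paper, which uses the spectral function and the factorization $A_0=JB^{1/2}\cdot B^{1/2}$. It can be completed, but not with the test vectors you name: $Jf$ and $Af_n$ need not lie in $\dom A$, and $[Ag,g]$ is then uncontrolled. Take instead $g=(A-\ol\mu)^{-1}k$ with $\mu\in\rho(A)$ and $\|k\|\le1$. This gives $\|(A-\mu)^{-1}Af\|\le C_\mu[Af,f]^{1/2}$. Combine it with $[Af,f]\le\frac{2|\la|}{|\Im\la|}\|(A-\la)f\|\|f\|$ and the identity $\la(A-\mu)^{-1}f=(A-\mu)^{-1}Af-(A-\mu)^{-1}(A-\la)f$. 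You obtain $\|f\|\le C_\mu'\big(\frac{(|\la|+|\mu|)^2}{|\la||\Im\la|}+\frac{1}{|\la|}\big)\|(A-\la)f\|$. Applied at $\la$ and $\ol\la$, this yields real spectrum and growth of order $2$ at both $0$ and $\infty$. The same identity, with $[Af_n,f_n]\to0$ and $(A-\la)f_n\to0$, forces $f_n\to0$; this supplies the contradiction in your strict-positivity step in place of ``$g=Af_n$''.
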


\begin{proof}
Let us assume that $A$ is a self-adjoint operator that satisfies the 
above conditions (i)--(iii). 
\vskip 0.2cm
\noindent
{\it Step 1.} We observe in this first step that $A$ is definitizable over $\ol\C$ in the sense
of Definition~\ref{locdef}. In fact, from (i) it is clear that for every point $\mu\in\R^+$ ($\mu\in\R^-$) there exists an open neighborhood
in $\R^+$ ($\R^-$) which is of positive type (negative type, respectively), 
and for $0$ and $\infty$ there exist neighborhoods where both components are of definite (but different) type. Furthermore, it is well known that the growth of the resolvent of $A$ near 
spectral points of positive or negative type is of order one (see, e.g. \cite{lmm}), and according to
conditions (ii) and (iii) the growth of the resolvent of $A$ near $0$ and $\infty$ 
is of order $2$. Thus, $A$ is definitizable over $\ol\C$ and hence definitizable 
in the classical sense of Langer, see \cite[Theorem~4.7]{j03}. 
\vskip 0.2cm
\noindent
{\it Step 2.} 
Let $E$ be the spectral function of $A$ and consider the self-adjoint operator 
$$A_0 = A|E((-1,1))\calH.$$ 
It is clear that the conditions (i)--(iii) hold also for $A_0$ 
and, in addition, $A_0$ is bounded. In this step we prove that $A_0$ is non-negative. For this
it is convenient to set $\Delta_n=\big[-\frac{1}{n},\frac{1}{n}\big]$ and to consider the bounded self-adjoint operators 
\begin{equation*}
A_0 | E(\Delta_n)\mathcal H,\quad A_0\vert E((-1,-\tfrac{1}{n}))\calH,
\quad\text{and}\quad
A_0 | E((\tfrac{1}{n},1))\mathcal H,\qquad n\in\N,
\end{equation*}
which also satisfy conditions (i)--(iii). 
We use \cite[Lemma 2.5]{PST12} (with $k=n=2$) to estimate
\begin{equation}\label{ojadanke}
\begin{split}
\|(A_0|E(\Delta_n)\calH)^2\|\,&\le\,4 \big(M+\| A_0|E(\Delta_n)\calH\| \big)r(A_0|E(\Delta_n)\calH)
\,\leq \frac{4(M+\|A_0\|)}{n},
\end{split}
\end{equation}
where $M>0$ is some constant independent of $n$ and $r(A_0|E(\Delta_n)\calH)$ denotes the spectral radius of $A_0|E(\Delta_n)\calH$. Now, let 
$f\in E((-1,1))\calH$ be arbitrary, and define 
\begin{equation*}
u_n := E(\Delta_n)f,\quad v_n:=E((-1,-\tfrac{1}{n}))f,\quad\text{and}\quad w_n := 
E((\tfrac{1}{n},1))f,\qquad n\in\N.
\end{equation*}
As the intervals $(-1,-\tfrac{1}{n})$ are of negative type and the intervals $(\tfrac{1}{n},1)$ 
are of positive type, it follows from Theorem~\ref{locspecfunc}~(c) and the functional calculus for definitizable operators (see, e.g. \cite[Corollary to Theorem 3.1]{L82})
that $A_0\vert E((-1,-\tfrac{1}{n}))\calH$ and 
$A_0 | E((\tfrac{1}{n},1))\mathcal H$ are both non-negative.
Then the sequences $[A_0v_n,v_n]$ and $[A_0w_n,w_n]$ are both non-negative, and thanks to \eqref{ojadanke} we obtain
$$
\left\|A^2\frac{u_n}{\|u_n\|}\right\| = \frac{1}{\|u_n\|}\|(A_0|E(\Delta_n)\calH)^2
u_n\|\,\le\,\frac{4(M+\| A_0\|)}{n},
$$
which tends to zero as $n\to\infty$. Therefore, condition (iii) yields
$
\liminf_{n\to\infty}\,[Au_n,u_n]\,\ge\,0,
$
and from 
$$
[A_0f,f] = [A_0u_n,u_n] + [A_0v_n,v_n]+ [A_0w_n,w_n]
$$
we conclude $[A_0f,f]\ge 0$.
\vskip 0.2cm
\noindent
{\it Step 3.}
It remains to consider
the self-adjoint operator
$$A_\infty = A|E(\ol\R\setminus (-1,1))\calH$$
and to check that $A_\infty$ is non-negative. 
It is clear that the conditions (i)--(iii) hold also for $A_\infty$,
and, in addition, $A_\infty$ is boundedly invertible. 
It follows from \cite[Lemma~2.4]{j03}  that condition (i) remains valid for $A_\infty^{-1}$. 
%Furthermore, (ii) holds as $A_\infty^{-1}$ is bounded. 
Now, observe that $0\not\in\sigma_p(A_\infty^{-1})$ and hence we can apply 
\cite[Corollary 3 of Proposition II.5.2]{L82} 
(note that the implication in \cite[Corollary 3 of Proposition II.5.2]{L82} also holds if $\alpha=0$ is not a critical point but divides the spectra in positive and negative type)
to conclude that $A_\infty^{-1}$ is non-negative.
From the non-negativity of $A_\infty^{-1}$ we obtain that also
 $A_\infty$ is non-negative. 

Now, observe that with respect  to the decomposition 
$$\calH = E((-1,1))\calH\, [\ds]\, E(\ol\R\setminus (-1,1))\calH$$ 
the operator $A$ can be written in the form
\begin{equation}\label{Wiesbaden}
    A = \mat{A_0}00{A_\infty},
\end{equation}
where $A_0$ is non-negative by Step 2 and $A_\infty$ is non-negative
by Step 3. This implies that $A$ is also non-negative and completes the first part of the proof.

\medskip
Now, we prove the necessity of the conditions (i)--(iii).
Assume for this that $A$ is a non-negative operator in $\calH$. 
Then it is well known that the spectrum of $A$ is real, see, e.g. \cite[Chapter VII, Theorem~1.3]{b}.
Next, let $\la\in\sigma(A)\cap\R^+$ and let $(f_n)$ be a sequence
in $\dom A$ with $\|f_n\| = 1$ such that $(A - \la)f_n\to 0$ as $n\to\infty$. Choose $\veps\in (0,\la)$ and set 
$$
P := E((-\veps,\veps)),
$$
where again $E$ denotes the spectral function of $A$. Let 
$$
g_n := Pf_n \quad \mbox{and}\quad  h_n := (I - P)f_n, \quad n \in \mathbb N.
$$
From $(A - \la)g_n\to 0$ as $n\to\infty$ and $\la\in\rho(A|P\calH)$ it follows that $g_n\to 0$ and thus $\|h_n\|\to 1$. Since $0\in\rho(A|(I - P)\calH)$, the operator $A|(I-P)\calH$ is uniformly positive and thus, for some $\gamma > 0$,
$$
\la [h_n,h_n] = [Ah_n,h_n] - [(A - \la)h_n,h_n]\,\ge\,\gamma\|h_n\|^2 - [(A - \la)h_n,h_n].
$$
 This implies
$$
\liminf_{n\to\infty}\,[f_n,f_n] = \liminf_{n\to\infty}\,[h_n,h_n]\,\ge\,\gamma/\la > 0.
$$
By a similar reasoning we see $\sigma(A)\cap\R^- \subset\sigma_-(A)$, and hence (i)
follows.

Next, we verify the growth of the resolvent of $A$ at $0$ and $\infty$ in (ii) and (iii).
For this we consider the bounded non-negative operator 
$$A_0 = A|E((-1,1))\calH$$ 
and we set $B := JA_0$, where $J$ is a fundamental symmetry in $\mathcal H$. 
Then $B$ is self-adjoint and non-negative in the Hilbert space $(\calH,\hproduct)$ with $\hproduct = [J\cdot,\cdot]$, and, hence,
$B^{1/2}JB^{1/2}$ is self-adjoint in $(\calH,\hproduct)$.
Recall that 
for bounded operators $S$ and $T$ one has $\sigma(ST)\setminus\{0\} = \sigma(TS)\setminus\{0\}$ and
$$
(ST - \la)^{-1} = \la^{-1}\big(S(TS - \la)^{-1}T - I\big),\quad\la\in\rho(ST)\setminus\{0\}.
$$
Therefore, setting 
$S = JB^{1/2}$ and $T = B^{1/2}$ gives $A_0=ST$ and thus
$\rho(A_0)\setminus\{0\}=\rho(B^{1/2}J B^{1/2})\setminus\{0\}$.
For
$\la\in\rho(A_0)\setminus\mathbb R$ we estimate
\begin{equation}\label{hurra}
\begin{split}
\|(A_0 - \la)^{-1}\|
&=\big\| \la^{-1}\big(JB^{1/2}(B^{1/2}JB^{1/2} - \la)^{-1}B^{1/2} - I\big)\big\|\\
&\le \frac{1}{|\la|}\left(\|B^{1/2}\|\|(B^{1/2}JB^{1/2} - \la)^{-1}\|\|B^{1/2}\| + 1\right)\\
&\le \frac{\|B\|}{|\la||\Im\la|} + \frac{1}{|\la|}\le \frac{\|B\|}{|\Im\la|^2} + \frac{1}{|\Im\la|}.
\end{split}
\end{equation}
Thus, the growth of the resolvent of $A_0$ at $0$, and hence that of $A$, is of order $2$. 

For the growth of the resolvent of $A$ at $\infty$ let 
$$A_\infty = A|E(\ol\R\setminus (-1,1))\calH,$$
and observe that $A_\infty$ is non-negative, boundedly invertible, and $A_\infty^{-1}$ is also non-negative. 
For $\la\in\rho(A_\infty)\setminus\R$ we have $A_\infty - \la = -\la (A_\infty^{-1} - \la^{-1})A_\infty$, and hence
$$
(A_\infty - \la)^{-1} = -\la^{-1}A_\infty^{-1}(A_\infty^{-1} - \la^{-1})^{-1} = -\la^{-1} - \la^{-2}(A_\infty^{-1} - \la^{-1})^{-1}.
$$
Using the estimate \eqref{hurra} for $(A_\infty^{-1} - \la^{-1})^{-1}$ we find for some $C>0$
$$
\|(A_\infty - \la)^{-1}\|\,\le\,\frac{1}{|\la|} + \frac{1}{|\la|^{2}} \frac{C}{|\Im\la^{-1}|^2}
=\frac{1}{|\la|} + C \frac{ |\la|^2}{|\Im\la|^2}
\le \frac{1}{|\Im\la|} + C \frac{ |\la|^2}{|\Im\la|^2}.
$$
Choose $\calU:=\{z\in \mathbb C : |z|>1\}$, then $\calU$ is an open neighborhood  of $\infty$ in $\ol\C$ and we conclude for $\lambda \in \calU\setminus\R$
$$
\|(A_\infty - \la)^{-1}\|\,\le\,
  \frac{|\Im\la|+C |\la|^2}{|\Im\la|^2}
  \,\le\,
  \frac{|\la|+C |\la|^2}{|\Im\la|^2}
  \,\le\,
  \frac{(C+1) |\la|^2}{|\Im\la|^2},
$$
which implies that the growth of the resolvent of $A_\infty$, and hence that of $A$, at $\infty$ is of order $2$. Thus, we have shown the resolvent growth in (ii) and (iii).

Finally, note that the second part in  (iii) is clear as $A$ is non-negative.
\end{proof}

In the next theorem we simplify
condition (iii) in Theorem \ref{t:nn} under the assumption that $0$ is a regular critical point by replacing the approximative eigensequences by vectors in the algebraic eigenspace.
Note that a growth of the resolvent of $A$ at $0$ of order $2$ implies $\ker A^3 = \ker A^2$ (and hence
$\ker A^{n+1} = \ker A^n$ for $n\geq 2$). In fact, if $A^3f=0$ and $\lambda\in\rho(A)$, then $-(A-\la)A^2f = \la A^2f$ and hence 
\begin{align*}
-A^2f
&= \la(A-\la)^{-1}A^2f = \la Af + \la^2(A-\la)^{-1}Af = \la Af + \la^2 f + \la^3(A-\la)^{-1}f.
\end{align*}
Thus, for $\la = i\eta$, $\eta>0$, we have $\|A^2f\|\le \eta\|Af\| + \eta^2\|f\| + \eta^3\frac{M}{\eta^2}\|f\|\to 0$ as $\eta\downto 0$, so that $f\in\ker A^2$. As a consequence, the algebraic eigenspace of $A$ at $0$ coincides with $\ker A^2$.

\begin{thm}\label{t:nn2}
A self-adjoint operator $A$ in the Krein space $(\calH,\product)$ is non-negative, and $0$ is not a singular critical point of $A$ if and only if the following conditions are satisfied:
\begin{enumerate}
\item[{\rm (i)}]   $\sigma(A)\subset\R$ and $\sigma(A)\cap\R^\pm\subset\sigma_\pm(A)$.
\item[{\rm (ii)}]  The growth of the resolvent of $A$ at $\infty$ is of order $2$.
\item[{\rm (iii)}] The growth of the resolvent of $A$ at $0$ is of order $2$ and
\footnote{Note that conditions (i), (ii), and the assumption that the growth of the resolvent of $A$ at $0$ is of order $2$ in (iii) already imply that 
$A$ is definitizable (cf.\ Step 1 of the proof of Theorem \ref{t:nn}).
Thus, the notion of critical points is meaningful, and we may exclude $0$ as a singular critical point in (iii).}
$0$ is not a singular critical point of $A$, and $[Af,f]\ge 0$ for $f\in \ker A^2$.
\end{enumerate}
\end{thm}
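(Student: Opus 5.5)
The plan is to reduce everything to Theorem~\ref{t:nn}. Conditions (i) and (ii) are identical in the two theorems, and so is the growth condition at $0$. So it suffices to show the following: under (i), (ii), order-$2$ growth at $0$, and the hypothesis that $0$ is not a singular critical point, the approximate-eigensequence condition in Theorem~\ref{t:nn}~(iii) is equivalent to $[Af,f]\ge 0$ for $f\in\ker A^2$. As in the footnote, Step~1 of the proof of Theorem~\ref{t:nn} shows that $A$ is definitizable, so the spectral function $E$ exists and "regular critical point" makes sense.

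\textbf{Necessity.} If $A$ is non-negative, then (i), (ii) and the growth part of (iii) follow from Theorem~\ref{t:nn}. The assumption on $0$ is part of the hypothesis, and $[Af,f]\ge 0$ on $\ker A^2\subset\dom A$ holds trivially.

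\textbf{Sufficiency.} Let $(f_n)\subset\dom A^2$ with $A^2f_n/\|f_n\|\to 0$; normalize so that $\|f_n\|=1$. We must show $\liminf_n [Af_n,f_n]\ge0$. Fix $\varepsilon\in(0,1)$ and split $f_n=u_n+v_n+w_n$ with
\[
u_n=E([-\varepsilon,\varepsilon])f_n,\qquad v_n=E([-1,-\varepsilon))f_n+E(\ol\R\setminus(-1,1))f_n\text{-part},\qquad w_n,
\]
where $v_n$ and $w_n$ are the components in $E(\ol\R\setminus[-\varepsilon,\varepsilon])\calH$. On the latter space, $0\in\rho(A)$, so $A^2$ is boundedly invertible there. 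Since the projections commute with $A$, the components $v_n+w_n$ tend to $0$ in $\dom A$-graph norm; more precisely $A(v_n+w_n)=A^{-1}A^2(v_n+w_n)\to0$. The cross terms vanish because the spectral subspaces are $\product$-orthogonal. Hence $[Af_n,f_n]-[Au_n,u_n]\to 0$, and it remains to handle $u_n$ on $E([-\varepsilon,\varepsilon])\calH$.

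\textbf{Key step (the main obstacle).} Because $0$ is a regular critical point, the projections $E([-\varepsilon,\varepsilon])$ are uniformly bounded. I would use the known consequence for definitizable operators with a regular critical point: $E(\{0\}):=\text{s-}\lim_{\varepsilon\downarrow0}E([-\varepsilon,\varepsilon])$ exists, is a bounded $\product$-self-adjoint projection, and its range is the root subspace at $0$, which equals $\ker A^2$ by the remark preceding the theorem. Write $u_n=E(\{0\})f_n+E([-\varepsilon,\varepsilon]\setminus\{0\})f_n$. The second summand lies in the sum of the spectral subspaces for $[-\varepsilon,0)$ and $(0,\varepsilon]$, which are of negative and positive type by (i). By the argument of Step~2 of Theorem~\ref{t:nn}, $A$ restricted to each of these subspaces is non-negative, and these subspaces are $\product$-orthogonal to each other and to $E(\{0\})\calH$. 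Therefore
\[
[Au_n,u_n]\ge [AE(\{0\})f_n,E(\{0\})f_n]\ge0
\]
by the hypothesis on $\ker A^2$. Letting $n\to\infty$ yields the $\liminf$ condition, and Theorem~\ref{t:nn} then gives non-negativity of $A$.

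The delicate point is justifying the existence and properties of $E(\{0\})$, together with the non-negativity of $A$ on $E((0,\varepsilon])\calH$ and $E([-\varepsilon,0))\calH$. These are open-at-$0$ sets, so they are not directly covered by Theorem~\ref{locspecfunc}. The plan is to obtain them as strong limits of $E((\delta,\varepsilon])$ as $\delta\downarrow0$, which exist by the uniform boundedness that follows from regularity. Non-negativity of $[A\cdot,\cdot]$ then passes to these limits.
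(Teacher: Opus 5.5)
\textbf{Gap: the normalization step in the reduction to Theorem~\ref{t:nn}.} Your necessity argument is fine. Your key step, splitting $E([-\veps,\veps])\calH$ into $E(\{0\})\calH=\ker A^2$ and a punctured part on which $A$ is non-negative, uses exactly the ingredients of the paper's proof. The problem is the reduction to Theorem~\ref{t:nn}. Condition (iii) there concerns arbitrary sequences with $A^2f_n/\|f_n\|\to0$. The conclusion $\liminf_n[Af_n,f_n]\ge0$ is not invariant under the rescaling $f_n\mapsto f_n/\|f_n\|$, so you cannot assume $\|f_n\|=1$. The unnormalized form is genuinely what Theorem~\ref{t:nn} uses: in its Step~2 the vectors $u_n=E(\Delta_n)f$ need not be bounded when $0$ is singular. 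For $\|f_n\|\to\infty$, your remainder estimate only gives $|[Ar_n,r_n]|\le C\|A^2f_n\|\,\|f_n\|$ with $r_n=(I-E([-\veps,\veps]))f_n$, and this need not tend to $0$. So, as written, you have not verified the hypothesis of Theorem~\ref{t:nn}.

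\textbf{Repair, and how it relates to the paper.} Replace the claim $[Ar_n,r_n]\to0$ by $[Ar,r]\ge0$ for all $r\in(I-E([-\veps,\veps]))\calH\cap\dom A$.
On the spectral subspaces for $(-1,-\veps)$ and $(\veps,1)$ this follows from Theorem~\ref{locspecfunc}~(c), as in Step~2 of Theorem~\ref{t:nn}. On $E(\ol\R\setminus(-1,1))\calH$ it follows from Step~3.
Together with your key step, this already gives $[Af,f]\ge0$ for every $f\in\dom A$. The detour through approximate eigensequences then becomes superfluous, and what remains is the paper's proof. The paper decomposes $\calH$ along $E(\{0\})\calH\,[\ds]\,E((-1,1)\setminus\{0\})\calH\,[\ds]\,E(\ol\R\setminus(-1,1))\calH$. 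It uses the hypothesis on $\ker A^2=E(\{0\})\calH$ for the first block, Langer's Corollary~3 of Proposition~II.5.2 for the middle block, and Step~3 for the last.

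\textbf{On the point you call delicate.} The paper's definition of regularity only bounds $E([-\veps,\veps])$, so uniform bounds for the one-sided projections $E((\delta,\veps])$ do not follow directly. You can avoid one-sided limits altogether. The projection $E([-\veps,\veps])-E([-\delta,\delta])=E([-\veps,-\delta))+E((\delta,\veps])$ maps onto a $\product$-orthogonal sum of a negative-type and a positive-type spectral subspace, on which $A$ is non-negative. It converges strongly to $E([-\veps,\veps])-E(\{0\})$ as $\delta\downarrow0$. Since $A$ is bounded on $E([-\veps,\veps])\calH$, non-negativity passes to the limit. So you only need the existence of $E(\{0\})$, which is exactly what Langer's Theorem~II.5.7 supplies in the paper.
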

\begin{proof}
Assume that (i)--(iii) are satisfied. 
As in Step 1 of the proof of Theorem \ref{t:nn}, it follows that the operator 
$A$ is definitizable with the only possible critical points $0$ and $\infty$. By assumption, $0$ is not a singular critical point. 
The same arguments as in Step 3 of the proof of Theorem~\ref{t:nn} show that 
$$A_\infty = A|E(\ol\R\setminus (-1,1))\calH$$ is non-negative.
Therefore, it remains to prove that the bounded operator $$A_0 = A|E((-1,1))\calH$$ 
is non-negative. Since $0$ is not a singular critical point of $A$, we may apply \cite[Theorem II.5.7]{L82}, which shows that the spectral function $E$ of $A$ admits an extension to Borel subsets of $\R$ with $0$ in their $\R$-boundary. Therefore, with respect to the decomposition 
$$
    E((-1,1))\calH = E(\{0\})\calH\,[\ds]\, E((-1,1)\setminus\{0\})\calH
$$
the operator $A_0$ can be written as a diagonal operator matrix
\begin{equation}\label{Martinroda2}
A_0 = \mat{A_0^\prime}00{A_0^{\prime\prime}},
\end{equation}
where $A_0^\prime = A|E(\{0\})\calH$ and $A_0^{\prime\prime} = A|E((-1,1)\setminus\{0\})\calH$. Note that $E(\{0\})\calH = \ker A^2$ (cf.\  \cite[Theorem II.5.7]{L82}). Therefore, $A_0^\prime$ is non-negative by (iii). Moreover, $A_0''$ is non-negative by \cite[Corollary 3 of Proposition II.5.2]{L82}.
Hence, $A_0$ is a non-negative operator, and thus $A$ is non-negative.

Conversely, if $A$ is non-negative and $0$ is not a singular critical point of $A$, 
then (i)--(iii) follow from Theorem~\ref{t:nn}.
\end{proof}

The next theorem discusses the important special case that the non-negative operator $A$ in $(\calH,\product)$
is similar to a self-adjoint operator in a Hilbert space.
Recall that a linear operator $T$ in the Krein space $(\calH,\product)$ is said to be \emph{similar to a self-adjoint operator in a Hilbert space} if there exists a bounded and boundedly invertible operator $V$ in $\calH$ such that $VTV^{-1}$ is self-adjoint in $(\calH,\hproduct)$. This is in fact equivalent to $T$ itself being self-adjoint in a Hilbert space $(\calH,\aproduct)$, where $\aproduct$ induces the Krein space topology. 
Indeed, it is straightforward to verify that for a bounded and boundedly invertible operator $V$ in $\calH$, $VTV^{-1}$ is self-adjoint in $(\calH,\hproduct)$ if and only if $T$ is self-adjoint in $(\calH,(V\cdot,V\cdot))$.

\begin{thm}\label{t:nn3}
A self-adjoint operator $A$ in the Krein space $(\calH,\product)$ is non-negative with $0$ and $\infty$ 
not being singular critical points of $A$ and $\ker A = \ker A^2$ if and only if the following conditions are satisfied:
\begin{enumerate}
\item[{\rm (i)}]   $\sigma(A)\subset\R$ and $\sigma(A)\cap\R^\pm\subset\sigma_\pm(A)$.
\item[{\rm (ii)}]  $A$ is similar to a self-adjoint operator in a Hilbert space.
\end{enumerate}
\end{thm}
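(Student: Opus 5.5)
We prove both implications separately. The forward one uses the spectral decomposition of $A$ into its parts near $0$, away from $0$ and $\infty$, and near $\infty$. The converse uses Theorem~\ref{t:nn2} together with the Hilbert space structure provided by (ii).

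\emph{Necessity.} Let $A$ be non-negative with $0$ and $\infty$ not singular critical points and $\ker A=\ker A^2$. Condition (i) follows from Theorem~\ref{t:nn}. For (ii), by \cite[Theorem II.5.7]{L82} the spectral function $E$ extends to all Borel sets of $\ol\R$, with $0$ and $\infty$ allowed as boundary points. The extended family is uniformly bounded, since $0$ and $\infty$ are regular. This gives the $[\cdot,\cdot]$-orthogonal decomposition
$$
\calH = E(\{0\})\calH\,[\ds]\,E((0,\infty))\calH\,[\ds]\,E((-\infty,0))\calH,
$$
with bounded projections. Here $E(\{0\})\calH=\ker A^2=\ker A$. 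On $\ker A$ the operator $A$ is $0$, and $\ker A$ is a closed subspace. Since it is the range of a self-adjoint projection, it is itself a Krein space, and we fix some Hilbert inner product on it.

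We claim that $(E((0,\infty))\calH,[\cdot,\cdot])$ and $(E((-\infty,0))\calH,-[\cdot,\cdot])$ are Hilbert spaces. To see this, write $E((0,\infty))$ as the strong limit of $E((1/n,n))$. On the ranges $E((1/n,n))\calH$ the form $[\cdot,\cdot]$ is positive definite, by Theorem~\ref{locspecfunc}(c). The uniform boundedness of these projections implies that $[\cdot,\cdot]$ is uniformly positive on the whole range $E((0,\infty))\calH$; this is the standard argument that a regular critical point produces a definite spectral subspace. On each of these Hilbert spaces $A$ is self-adjoint. Equipping $\calH$ with the direct sum inner product $\aproduct$, which is equivalent to the original topology, $A$ becomes self-adjoint in $(\calH,\aproduct)$. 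By the remark preceding the theorem, this gives (ii).

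\emph{Sufficiency.} Assume (i) and (ii), and let $A$ be self-adjoint in a Hilbert space $(\calH,\aproduct)$ equivalent to the Krein topology. We use the Hilbert space resolvent bound $\|(A-\la)^{-1}\|\le C/|\Im\la|$. This gives growth of order $1$ at $0$ and at $\infty$, hence order $2$ (at $\infty$ note $|\la|^2\ge|\Im\la|$ for $|\la|\ge 1$). Thus (ii) and the growth part of (iii) in Theorem~\ref{t:nn2} hold. Also $\ker A=\ker A^2$, since this holds for every Hilbert space self-adjoint operator. Consequently $[Af,f]=0$ for $f\in\ker A^2$.

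It remains to show that $0$ and $\infty$ are not singular critical points. The spectral function of $A$ in the Krein space is built from resolvent integrals, so it coincides with the orthogonal spectral measure $F$ of $A$ in $(\calH,\aproduct)$ on admissible sets. Indeed, both are given by the same Riesz–Dunford/Stieltjes limits for intervals with endpoints of definite type. Hence $\|E([-\veps,\veps])\|\le C\|F([-\veps,\veps])\|_{\aproduct}\le C$, and similarly near $\infty$. So $0$ and $\infty$ are regular. Theorem~\ref{t:nn2} then yields non-negativity, and we already have $\ker A=\ker A^2$.

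The main obstacle is the two identifications of the spectral function. In the necessity part we must show that regularity gives uniform definiteness of $E((0,\infty))\calH$. In the sufficiency part we must show that the Krein spectral function equals the Hilbert spectral measure. For the second, I would first try the Stone-type formula for $E$ from \cite{L82} and compare it with Stone's formula for $F$.
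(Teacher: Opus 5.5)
Your proposal is correct. The sufficiency half is essentially the paper's argument. It uses Hilbert-space resolvent bounds, $\ker A=\ker A^2$, the identification of the Krein spectral function with the Hilbert spectral measure, and then Theorem~\ref{t:nn2}. For the identification step you flag, the paper does not compare Stone-type formulas; it simply invokes uniqueness of the (local) spectral function \cite[Lemma 2.12]{j03}, which is the quickest way to close that step.

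The necessity half takes a different route.

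\emph{The paper's route.} It splits at $\pm 1$. On $E((-1,1))\calH$ it separates off $\ker A$ and cites Langer for the fact that $[(E_+-E_-)\cdot,\cdot]$ is a Hilbert inner product on $E((-1,1)\setminus\{0\})\calH$. It then treats $A_\infty$ by passing to $A_\infty^{-1}$, which is non-negative with $0$ a non-singular critical point and trivial kernel. This way only the extension of $E$ at a \emph{finite} regular critical point is needed, and the two fundamental symmetries are glued at the end.

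\emph{Your route.} You work globally with $\calH=\ker A\,[\ds]\,E(\R^+)\calH\,[\ds]\,E(\R^-)\calH$. This needs the extension of $E$ at $\infty$ as well, which is standard and is noted in the paper before Theorem~\ref{t:main1}. In return it avoids both the inversion and the gluing. It also proves the definiteness of $E(\R^\pm)\calH$ directly instead of citing it.

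Your ``standard argument'' does go through. Let $P$ be a bounded $\product$-self-adjoint projection whose range is non-negative; this holds for $P=E(\R^+)$, since $P\calH$ is approximated by the positive ranges of $E((1/n,n))$. For $g\in P\calH$, Cauchy--Schwarz on $P\calH$ gives $\|g\|^2=[PJg,g]\le[PJg,PJg]^{1/2}[g,g]^{1/2}\le\|P\|\,\|g\|\,[g,g]^{1/2}$. Hence $[g,g]\ge\|P\|^{-2}\|g\|^2$. So what you actually need is the boundedness of $E(\R^\pm)$ itself, rather than a uniform bound on the approximating projections.
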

\begin{proof}
Suppose that $A$ is non-negative with $0$ and $\infty$ not being singular critical points and $\ker A = \ker A^2$. 
Then, by Theorem~\ref{t:nn}, (i) follows and it remains to show (ii).
For this, we set 
$$
\calH_0 = E((-1,1))\calH \quad \mbox{and} \quad  \calH_\infty = E(\ol\R\setminus (-1,1))\calH
$$
as well as 
$$
A_0 = A\,|\,\calH_0\quad  \mbox{and} \quad A_\infty = A\,|\,\calH_\infty.
$$
Consider $A_0$ and decompose $\calH_0$ further into 
$$\calH_0' = E(\{0\})\calH = \ker A \quad \mbox{and} \quad \calH_0'' = E((-1,1)\setminus\{0\})\calH.
$$
By \cite[Theorem 5.7]{L82} and the subsequent discussion, the projectors $E_- = E((-1,0))$ and $E_+ = E((0,1))$ exist, and
\[
\big(\calH_0'',\,[(E_+ - E_-)\cdot,\cdot]\big)
\]
is a Hilbert space in which $A_0'' = A_0\,|\,\calH_0''$ is self-adjoint, since $A_0''$ commutes with $E_+$ and $E_-$. Hence, if $\ker A = \{0\}$, then $A_0 = A_0''$ is self-adjoint in a Hilbert space. If $\ker A\neq\{0\}$, we choose a fundamental symmetry $J_0'$ on $\calH_0'$. Then
\[
J_0 = \begin{pmatrix}J_0' & 0\\0 &  E_+-E_-\end{pmatrix}
\]
defines a fundamental symmetry on $\calH_0 = \calH_0'\,[\ds]\,\calH_0''$, and
$A_0\,|\,\calH_0'$ is the zero operator. Therefore  $A_0$ is self-adjoint in the Hilbert space $(\calH_0,[J_0\cdot,\cdot])$. 

Note that $A_\infty$ is boundedly invertible and that $A_\infty^{-1}$ is non-negative, $0$ is not a singular critical point of $A_\infty^{-1}$, and $\ker A_\infty^{-1} = \{0\}$. Hence, as just proved for $A_0$, there exists a fundamental symmetry $J_\infty$ such that $A_\infty^{-1}$ is self-adjoint in $(\calH_\infty,[J_\infty\cdot,\cdot])$. The same holds for $A_\infty$, and thus $A$ is self-adjoint in the Hilbert space $(\calH,[J_A\cdot,\cdot])$, where $J_A = \smallmat{J_0}{0}{0}{J_\infty}$.

Conversely, assume that (i) and (ii) hold, let $\<\cdot\,,\cdot\>$ be the Hilbert space scalar product on $\calH$ such that $A$ is self-adjoint, and denote by $\|\cdot\|_1$ the corresponding norm. As a self-adjoint operator in a Hilbert space, the resolvent growth of $A$ is of order $1$ at each point $\la_0\in\ol\R$, and $\ker A = \ker A^2$. Hence, the conditions (i) and (ii) of Theorem~\ref{t:nn2} are satisfied. To prove Theorem \ref{t:nn2}~(iii), observe first that $[Af,f]=0$ for all $f\in\ker A^2 = \ker A$, and hence for Theorem \ref{t:nn2}~(iii) it remains to verify that $0$ is not a singular critical point of $A$. For this, denote by $\wh E$ the spectral measure of $A$ as a self-adjoint operator in $(\calH,\aproduct)$ and by $E$ the spectral function of $A$ as a definitizable operator. By the uniqueness of the spectral function (see, e.g., \cite[Lemma 2.12]{j03}), we have $\wh E(\Delta) = E(\Delta)$ for all $\Delta$ for which $E(\Delta)$ is defined. Moreover, as $\|\cdot\|_1$ and $\|\cdot\|$ are equivalent norms and $\|\wh E(\Delta)f\|_1\le \|f\|_1$ for each $f\in\calH$ and each Borel set $\Delta\subset\ol\R$, we conclude that there exists $C>0$ such that $\|E(\Delta)\|\le C$ for each $\Delta$ for which $E(\Delta)$ is defined. Hence, $0$ is not a singular critical point of $A$. The same argument also shows that $\infty$ is not a singular critical point of $A$. Therefore, conditions (i)--(iii) in Theorem~\ref{t:nn2} are satisfied, and the assertion follows.
\end{proof}

\begin{rem}
In many applications in $\mathcal P \mathcal T$-quantum mechanics \cite{HollaDieWaldFee1,LT04,Mosta10}, one is interested in the existence of a so-called $\mathcal C$-metric operator which is equivalent to the similarity of the underlying Hamiltonian operator $A$ to a self-adjoint operator in a Hilbert space. Here, we only refer to \cite{HollaDieWaldFee2,HollaDieWaldFee3}.
\end{rem}

\section{Spectral characterization of locally non-negative operators}
Inspired by the local nature of the characterizations of non-negative operators in the previous section, we study a class of self-adjoint operators in Krein spaces whose spectral properties resemble those of non-negative operators outside a compact set.
In this sense, the next definition  can be viewed as a localization of the notion of 
non-negati\-vi\-ty of self-adjoint operators in a neighborhood of $\infty$.

\begin{defn}\label{d:lnn}
Let $K\subset\C$ be a compact set which is symmetric with respect to the real axis
such that 
$\C^+\setminus K$ is simply connected. A self-adjoint operator $A$ in the Krein space $(\calH,[\cdot,\cdot])$ is said to be {\em non-negative over} $\ol\C\setminus K$ if for any bounded open neighborhood $\calU$ of $K$ in $\C$ with $0\notin\partial\calU$ there exists a bounded $\product$-self-adjoint projection $E_\infty$ such that with respect to the decomposition
\begin{equation}\label{e:dec_lnn}
\calH = (I-E_\infty)\calH\,[\ds]\,E_\infty\calH
\end{equation}
the operator $A$ can be written as a diagonal operator matrix
\begin{equation}\label{e:deco}
A = \mat{A_b}00{A_\infty},
\end{equation}
where $A_b$ is a bounded self-adjoint operator in the Krein space $((I-E_\infty)\calH ,\product)$ with
$\sigma(A_b) \subset \ol\calU$ and $A_\infty$ is a non-negative operator in the Krein space $(E_\infty\calH ,\product)$ with $\calU\subset\rho(A_\infty)$.
\end{defn}

\begin{rem}
Definition \ref{d:lnn} is slightly more general than the definition in \cite{BPT28} as we do not assume here that $0\in K$.
Furthermore, it differs slightly from
\cite[Definition~3.1]{BJ05}; for more details, see \cite[Definition 2.1 and footnote]{BPT28}.
\end{rem}

Observe that a self-adjoint operator $A$ in $\calH$ is non-negative over $\ol\C = \ol\C\setminus\emptyset$ if and only if $A$ is non-negative (in the sense of \eqref{ojabitte3}). Indeed, if $A$ is non-negative, then for any bounded open set $\calU\subset\C$ with $0\notin\partial\calU$ define $E_\infty := I - E(\calU\cap\R)$, where $E$ denotes the spectral function of $A$. Then $A$ decomposes as in \eqref{e:dec_lnn}--\eqref{e:deco} with the desired properties. Conversely, if $A$ is non-negative over $\ol\C$, simply choose $\calU = \emptyset$ in Definition \ref{d:lnn} to see that $A$ is non-negative.

\begin{prop}
If the self-adjoint operator $A$ in the Krein space $(\calH,\product)$ is non-negative over $\ol\C\setminus K$, then there exists $\gamma\in\R$ such that
\begin{equation*}
[A f,f] \ge\gamma\|f\|^2\quad  \mbox{for all } f\in\dom A.
\end{equation*}
\end{prop}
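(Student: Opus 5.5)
Fix one admissible bounded open neighborhood $\calU$ of $K$ with $0\notin\partial\calU$; Definition~\ref{d:lnn} then gives a bounded $\product$-self-adjoint projection $E_\infty$ and the diagonal decomposition \eqref{e:deco}. Write $f\in\dom A$ as $f=f_b+f_\infty$ with $f_b=(I-E_\infty)f$ and $f_\infty=E_\infty f$. The two components are $\product$-orthogonal and $A$ acts diagonally, so the plan starts from
$$
[Af,f]=[A_bf_b,f_b]+[A_\infty f_\infty,f_\infty].
$$

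The second term is non-negative because $A_\infty$ is a non-negative operator in $(E_\infty\calH,\product)$. The first term is handled with the boundedness of $A_b$ and of the projection:
$$
|[A_bf_b,f_b]|\le\|J\|\,\|A_b\|\,\|f_b\|^2\le\|A_b\|\,\|I-E_\infty\|^2\,\|f\|^2 .
$$
Here $J$ is the fundamental symmetry, so $\|J\|=1$. The norm $\|A_b\|$ may be taken either in the Krein space topology of $(I-E_\infty)\calH$ or as the norm of $A(I-E_\infty)$ in $\calH$. The latter is bounded because $A_b$ is a bounded everywhere defined operator on the closed subspace $(I-E_\infty)\calH$ and $I-E_\infty$ is bounded.

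Combining the two terms gives the statement with
$$
\gamma:=-\|A_b\|\,\|I-E_\infty\|^2 .
$$

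The only delicate point is to check that the decomposition really applies to every $f\in\dom A$. That is, we need $E_\infty\dom A\subset\dom A$, $(I-E_\infty)\calH\subset\dom A$, and $A f_b=A_bf_b$, $Af_\infty=A_\infty f_\infty$. This is exactly what the matrix representation \eqref{e:deco} means: $A$ is the direct sum of $A_b$ and $A_\infty$, so $\dom A=(I-E_\infty)\calH\,[\ds]\,\dom A_\infty$ and the cross terms vanish by $\product$-orthogonality. I expect no real obstacle beyond making this interpretation explicit. In particular, no spectral information about $A_b$ (such as $\sigma(A_b)\subset\ol\calU$) is needed, only its boundedness.
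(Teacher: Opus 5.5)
Your proof is correct and is essentially the paper's argument: you split $f=f_b+f_\infty$ along $E_\infty$, discard the non-negative term $[A_\infty f_\infty,f_\infty]$, and bound the bounded part $[A_bf_b,f_b]$. The only difference is in how the norms are handled. The paper first combines fundamental decompositions of $(I-E_\infty)\calH$ and $E_\infty\calH$ into a fundamental symmetry commuting with $E_\infty$, which gives $\|f\|^2=\|f_b\|^2+\|f_\infty\|^2$ and $\gamma=-\|A_b\|$ in that adapted (equivalent) norm. You instead keep the originally fixed norm and absorb the non-orthogonality into the factor $\|I-E_\infty\|^2$. This yields the inequality for the norm in the statement directly, without appealing to equivalence of norms; here $\|A_b\|$ should be read as the operator norm for the restriction of $\|\cdot\|$, which is what your estimate uses.
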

\begin{proof}
Let $E_\infty$ be a $\product$-self-adjoint projection such that $A$ can be written as in \eqref{e:deco} with respect to the decomposition \eqref{e:dec_lnn} with a bounded operator $A_b$ and a non-negative operator $A_\infty$. 
Since $E_\infty\calH$ and $(I-E_\infty)\calH$ are both Krein spaces they admit  fundamental decompositions
$$
E_\infty\calH =\calH_\infty^+\,[\ds]\,\calH_\infty^-
\quad \mbox{and} \quad 
(I-E_\infty)\calH =\calH_b^+\,[\ds]\,\calH_b^-,
$$
that lead to
a new fundamental decomposition of the underlying space 
$\calH$,
$$
\calH =\left(\calH_\infty^+\,[\ds]\,\calH_b^+\right)
\,[\ds]\,
\left(\calH_\infty^-\, [\ds]\,\calH_b^-\right).
$$
The corresponding fundamental symmetry $J$ satisfies $JE_\infty = E_\infty J$, hence $(I-E_\infty)\calH$ and $E_\infty\calH$ are orthogonal with respect to the scalar product $[J\cdot,\cdot]$.
Then for $f\in\dom A$, $f = f_b + f_\infty$ with $f_b\in (I-E_\infty)\calH$ and $f_\infty\in E_\infty\calH$, we have $$\|f\|^2=\|f_b\|^2+\|f_\infty\|^2,$$
and hence we obtain
$$
[Af,f] = [A_bf_b,f_b] + [A_\infty f_\infty,f_\infty]\,\ge\,-|[A_bf_b,f_b]|\,\ge\,-\|A_b\|\|f_b\|^2\,\ge\,-\|A_b\|\|f\|^2.
$$
Thus, the claim holds with $\gamma = -\|A_b\|$.
\end{proof}

The next statement  on local definitizability of locally non-negative operators
is a consequence of the representation \eqref{e:deco} and Theorem \ref{t:nn}.

\begin{prop}\label{Halle}
Let $A$ be a self-adjoint operator in the Krein space $(\calH,\product)$ which is non-negative over $\ol\C\setminus K$.
Then $A$ is definitizable  over $\ol\C\setminus K$, and we have
$$
\sigma(A)\setminus\R\subset K \quad \mbox{and}\quad (\sigma(A)\setminus K)\cap\R^\pm\subset\sigma_\pm(A).
$$
\end{prop}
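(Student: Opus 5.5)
The plan is to fix a point of $\Omega:=\ol\C\setminus K$, or more generally a compact piece of $\Omega$, and choose a suitable neighborhood $\calU$ of $K$ so that the decomposition \eqref{e:deco} separates $K$ from the region under consideration. Since $K$ is compact and $\C^+\setminus K$ is simply connected, $\Omega$ is a domain symmetric with respect to $\R$, and its intersections with $\C^\pm$ are simply connected. Moreover $\Omega\cap\ol\R\ni\infty$ is nonempty, so Definition~\ref{locdef} makes sense.

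First we establish the spectral inclusions. Let $\calU$ be a bounded open neighborhood of $K$ with $0\notin\partial\calU$. By Definition~\ref{d:lnn}, $\sigma(A)=\sigma(A_b)\cup\sigma(A_\infty)$ with $\sigma(A_b)\subset\ol\calU$, and $\sigma(A_\infty)\subset\R\setminus\calU$ by Theorem~\ref{t:nn}(i). Taking the intersection over a decreasing family of such $\calU$ shrinking to $K$, with $\ol{\calU_n}\downarrow K$, gives $\sigma(A)\setminus\R\subset K$.

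For $\la\in(\sigma(A)\setminus K)\cap\R^+$ we choose $\calU$ with $\la\notin\ol\calU$. Then $\la\in\rho(A_b)$, so any approximate eigensequence $f_n$ for $A$ at $\la$ has $(I-E_\infty)f_n\to0$. By boundedness of $E_\infty$ and the $[\cdot,\cdot]$-orthogonality of the decomposition, $[f_n,f_n]-[E_\infty f_n,E_\infty f_n]\to0$, and $E_\infty f_n$ is an approximate eigensequence for $A_\infty$ at $\la$ with norms bounded away from zero. Theorem~\ref{t:nn}(i), applied to $A_\infty$, then gives $\liminf[f_n,f_n]>0$ after normalisation, so $\la\in\sigma_+(A)$. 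The case $\R^-$ is the same.

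Next we check the definitizability over $\Omega$. The non-real spectrum in $\Omega$ is empty, so the requirements on poles and on non-accumulation hold trivially. For condition (i) of Definition~\ref{locdef}, at a real $\mu\in\Omega$ with $\mu\neq0$ a small interval around $\mu$ avoids $K$ and $0$, hence is of definite type by the above. At $\mu=0\in\Omega$, respectively $\mu=\infty$, the two sides of a small punctured neighborhood lie in $\R^-$ and $\R^+$, hence are of negative and positive type. For condition (ii) we again choose $\calU$ avoiding a neighborhood $\calV$ of $\mu$. Then
\[
(A-\la)^{-1}=(A_b-\la)^{-1}(I-E_\infty)+(A_\infty-\la)^{-1}E_\infty ,
\]
and the first term is uniformly bounded for $\la$ in a smaller neighborhood, since $\sigma(A_b)\subset\ol\calU$ is at positive distance from it (at $\infty$ it is even $O(1/|\la|)$). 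The second term has growth of order at most $2$ at $\mu$, by Theorem~\ref{t:nn}(ii),(iii) for $\mu\in\{0,\infty\}$ and by the order one growth at points of definite type, recalled in Step 1 of the proof of Theorem~\ref{t:nn}, otherwise. Together with the bound $\|E_\infty\|$ this yields finite order growth for $A$.

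The main obstacle is the bookkeeping of neighborhoods. One must be sure that for each $\mu\in\Omega\cap\ol\R$ a single admissible $\calU$ can be chosen whose closure misses a neighborhood of $\mu$; this requires $0\notin\partial\calU$, which is possible by slightly adjusting $\calU$. One must also handle the growth estimate at $\infty$, where the bounded part contributes only $O(|\la|^{-1})$, which is dominated by $M|\la|^2/|\Im\la|^2$.
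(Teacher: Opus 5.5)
Your proposal is correct and takes the same route as the paper. The paper gives no written-out proof; it only states that the proposition is a consequence of the decomposition \eqref{e:deco} and Theorem~\ref{t:nn}. Your argument supplies exactly those omitted details: choosing $\calU$ away from the point under consideration, transferring approximate eigensequences and resolvent bounds from $A_\infty$ to $A$, and observing that the bounded part $A_b$ contributes only a locally bounded resolvent term.
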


%It might now be natural to conjecture that conversely, if $[Af,f]\ge\gamma\|f\|^2$ for all $f\in\dom A$ and $\rho(A)\neq\emptyset$, then $A$ is non-negative over some $\ol\C\setminus K$. We leave this as an open question.

% \begin{ex}
% Let $\calH = \C^2$, $J = \smallmat{1}{0}{0}{-1}$, and $A_s := \smallmat{s}{s+1}{-s-1}{-s}$, $s > 0$. Then, since $JA_s = \smallmat{s}{s+1}{s+1}{s}$, the operator $A_s$ is self-adjoint in the Krein space $(\C^2,\product)$ for every $s > 0$, where $\product := (J\cdot,\cdot)$. Moreover, for $f = (x,y)^T$, $x,y\in\C$, we have
% \begin{align*}
% [A_sf,f]
% &= (JA_sf,f) = \left(\vek{sx+(s+1)y}{(s+1)x + sy},\vek{x}{y}\right) = s|x|^2 + (s+1)2\Re(y\ol x) + s|y|^2\\
% &= 2\Re(y\ol x) + s|x+y|^2\,\ge\,2\Re(y\ol x)\,\ge\,-2|xy|\,\ge\,-\|f\|^2.
% \end{align*}
% But the eigenvalues of $A_s$ are $\pm i\sqrt{2s+1}$.
% \end{ex}

If $A$ is non-negative over $\ol\C\setminus K$, then Proposition \ref{Halle} and  Theorem \ref{locspecfunc} ensure that
$A$ possesses a local spectral function $E$ on $\ol\R\setminus K$. 
The spectral projection $E(\Delta)$ is defined for all Borel sets $\Delta\subset\ol\R\setminus K$ for which neither $\infty$ nor the boundary points of $K\cap\R$ (nor $0$, if $0\notin K$) are boundary points.

In the next theorem, which is a local version of Theorem~\ref{t:nn}, we provide a characterization of self-adjoint operators that are locally non-negative.

\begin{thm}\label{t:eqthm}
Let $A$ be a self-adjoint operator in the Krein space $(\calH,\product)$
and let 
$K\subset\C$ be a compact set which is symmetric with respect to the real axis
such that $\C^+\setminus K$ is simply connected. Then $A$ is non-negative over $\ol\C\setminus K$ if and only if the following conditions are satisfied:
\begin{enumerate}
\item[{\rm (i)}]   $\sigma(A)\setminus\R\subset K$ and $(\sigma(A)\setminus K)\cap\R^\pm\subset\sigma_\pm(A)$.
\item[{\rm (ii)}]  The growth of the resolvent of $A$ at $\infty$ is of order $2$.
\item[{\rm (iii)}] If $0\notin K$, then the growth of the resolvent of $A$ at $0$ is of order $2$ and for each sequence $(f_n)$ in $\dom A^2$ with $A^2f_n/\|f_n\|\to 0$ as $n\to\infty$ we have
$$
\liminf_{n\to\infty}\,[Af_n,f_n]\ge 0.
$$
\end{enumerate}
\end{thm}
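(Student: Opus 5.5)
Both directions reduce to Theorem~\ref{t:nn} by splitting $A$ with a spectral projection, once near $K$ and once near $\infty$.

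\emph{Necessity.} Assume $A$ is non-negative over $\ol\C\setminus K$. Proposition~\ref{Halle} already gives (i).

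For (ii), fix a bounded open neighbourhood $\calU$ of $K$ with $0\notin\partial\calU$. If $0\notin K$, choose $\calU$ so small that $0\notin\ol\calU$. Take the decomposition \eqref{e:deco}. Then
$$
(A-\la)^{-1}=(A_b-\la)^{-1}\oplus(A_\infty-\la)^{-1}.
$$
The first summand is $O(1/|\la|)$ near $\infty$, because $A_b$ is bounded. The second has growth of order $2$ at $\infty$ by Theorem~\ref{t:nn} applied to the non-negative operator $A_\infty$. Since the projection $E_\infty$ is bounded, this gives (ii).

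For (iii), let $0\notin K$. Then $0\in\rho(A_b)$, because $\sigma(A_b)\subset\ol\calU\not\ni 0$. Near $0$ the resolvent growth comes only from $A_\infty$, so it has order $2$ by Theorem~\ref{t:nn}. Now take $(f_n)$ with $A^2f_n/\|f_n\|\to0$ and split it along \eqref{e:dec_lnn} into $b_n+g_n$. Since $A_b^2$ is boundedly invertible and $A_b^2b_n/\|f_n\|\to0$, we get $b_n/\|f_n\|\to0$. Hence $[Ab_n,b_n]/\|f_n\|^2\to0$. The $\infty$-components satisfy $[A_\infty g_n,g_n]\ge0$, so
$$
\liminf_{n\to\infty}\frac{[Af_n,f_n]}{\|f_n\|^2}\ge0 .
$$
Here I read the condition in (iii) as normalized, with $\|f_n\|=1$, just as in Theorem~\ref{t:nn}.

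\emph{Sufficiency.} Assume (i)--(iii). As in Step~1 of the proof of Theorem~\ref{t:nn}, the operator $A$ is definitizable over $\Omega:=\ol\C\setminus K$.
\begin{itemize}
\item Points of $\Omega\cap\R\setminus\{0\}$ are of definite type by (i), hence have resolvent growth of order one.
\item The growth at $\infty$ is given by (ii), and the growth at $0$ (when $0\notin K$) by (iii).
\item Non-real spectrum in $\Omega$ is excluded by (i).
\end{itemize}
One point needs care: $\Omega$ may fail to be connected, but its unbounded component suffices. Theorem~\ref{locspecfunc} then provides a local spectral function $E$.

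Now let $\calU$ be a bounded open neighbourhood of $K$ with $0\notin\partial\calU$. Pick a finite union $\Delta\subset\R$ of open intervals such that:
\begin{itemize}
\item $\Delta\supset\ol\calU\cap\R$ up to small shrinking;
\item the endpoints of $\Delta$ lie in $\rho(A)$ or in $\sigma_\pm(A)$, so they are not critical;
\item $\partial\Delta$ avoids $0$ and $\infty$;
\item $\sigma(A)\setminus\ol\calU$ misses $\ol\Delta$.
\end{itemize}
This is possible because $\sigma(A)\setminus K$ is real and $\sigma_\pm$ is open in $\sigma(A)$. One must arrange the projections so that $\sigma(A_b)\subset\ol\calU$ exactly. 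For this, take $\Delta$ inside $\calU\cap\R$ and containing $\sigma(A)\cap K\cap\R$; this uses compactness and a slight shrinking, choosing boundary points in $\rho(A)\cup\sigma_+(A)\cup\sigma_-(A)$.

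If $0\in\calU$ but $0\notin K$, either include $0$ in $\Delta$, or exclude it via $\calU$; both are allowed since $0\notin\partial\calU$. Set $E_\infty:=I-E(\Delta)$. By Theorem~\ref{locspecfunc}(b),(d),(e):
\begin{itemize}
\item $A_b=A|E(\Delta)\calH$ is bounded with spectrum in $\ol\calU$;
\item $A_\infty$ has $\calU\subset\rho(A_\infty)$.
\end{itemize}
It then remains to show that $A_\infty$ is non-negative. Its spectrum is real, and its positive and negative parts are of positive and negative type, since they lie outside $K$. Its resolvent growth at $\infty$ and at $0$ is inherited from $A$. The sequence condition at $0$ restricts to $E_\infty\calH$ trivially, because the approximating sequences lie in the subspace. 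If $0\in K$, then $0\in\calU\subset\rho(A_\infty)$, and the conditions at $0$ hold vacuously. Theorem~\ref{t:nn} then gives non-negativity of $A_\infty$.

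\emph{Main obstacle.} The delicate step is the choice of $\Delta$. Its boundary points must be admissible for the local spectral function, and the resulting spectra must fit $\ol\calU$ and $\rho(A_\infty)$ exactly. Handling the possible disconnectedness of $\Omega$ near $K\cap\R$, and the case where $0\in\calU\setminus K$, needs the most care.
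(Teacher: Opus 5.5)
\textbf{The gap: the splitting in the sufficiency direction.} Your necessity direction is fine, and more explicit than the paper's one-line appeal to \eqref{e:deco} and Theorem~\ref{t:nn}. Your overall plan for sufficiency is also the paper's: definitizability over $\ol\C\setminus K$, split off a spectral part, apply Theorem~\ref{t:nn} to $A_\infty$. The gap is in how you construct the splitting, exactly at the step you call the main obstacle. You take a bounded $\Delta\subset\calU\cap\R$ containing $\sigma(A)\cap K\cap\R$ and set $A_b=A|E(\Delta)\calH$, $E_\infty=I-E(\Delta)$.

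First, $E(\Delta)$ is not defined. The local spectral function of an operator definitizable over $\ol\C\setminus K$ lives only on Borel subsets of $\ol\R\setminus K$, and your $\Delta$ meets $K$.

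Second, and more fundamentally, the projection is on the wrong side. If $E(\Delta)$ had the properties in Theorem~\ref{locspecfunc}, then (b) would give $\sigma(A_b)\subset\ol\Delta\subset\R$. Hence every non-real spectral point of $A$, all of which lie in $K\subset\calU$, would belong to $\sigma(A_\infty)$. That contradicts $\calU\subset\rho(A_\infty)$ and destroys the reality of $\sigma(A_\infty)$ that Theorem~\ref{t:nn} requires. The spectral part of $A$ in $K$ is not the range of any $E(\Delta)$. It can only be captured as the complement $(I-E_\infty)\calH$ of a spectral subspace belonging to a neighbourhood of $\infty$.

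Even on the correct side, asking $\Delta\supset\sigma(A)\cap K\cap\R$ would not suffice: you need all of $\sigma(A)\cap\calU\cap\R$. A finite union of intervals inside $\calU\cap\R$ cannot cover that in general, for instance when $\calU\cap\R$ has infinitely many components meeting $\sigma(A)$.

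\textbf{The fix.} The paper's choice removes the obstacle entirely: set $E_\infty:=E(\ol\R\setminus\calU)$.
\begin{enumerate}
\item[(1)] This set is a closed neighbourhood of $\infty$ in $\ol\R\setminus K$. Its boundary points lie in $\partial\calU\cap\R$, which avoids both $K$ and $0$. By (i) they therefore belong to $\rho(A)\cup\sigma_+(A)\cup\sigma_-(A)$, so $E_\infty$ is defined.
\item[(2)] Theorem~\ref{locspecfunc}(b) gives $\sigma(A_\infty)\subset\sigma(A)\cap(\R\setminus\calU)$. So $\sigma(A_\infty)$ is real and $\calU\subset\rho(A_\infty)$.
\item[(3)] Part (e) makes $A_b=A|(I-E_\infty)\calH$ bounded.
\item[(4)] Part (d), together with $\sigma(A)\setminus\R\subset K$, gives $\sigma(A_b)\subset\ol\calU$.
\end{enumerate}
No finite unions, shrinking, or case distinction at $0$ are needed, since $0\notin\partial\calU$. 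Your verification of (i)--(iii) of Theorem~\ref{t:nn} for $A_\infty$ then goes through. Also, $\ol\C\setminus K$ is automatically connected under the hypotheses on $K$, so no component has to be selected.

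\textbf{Your normalized reading of (iii).} This reading is not just convenient; it is necessary for the necessity direction. Take $\C^2$ with $[x,y]=x_1\ol{y_1}-x_2\ol{y_2}$ and $A=\operatorname{diag}(0,1)$. This $A$ is non-negative over $\ol\C\setminus\{1\}$: take $E_\infty=\operatorname{diag}(1,0)$ if $0\notin\calU$ and $E_\infty=0$ if $0\in\calU$. Yet $f_n=(n,\sqrt n)^T$ satisfies $A^2f_n/\|f_n\|\to0$ while $[Af_n,f_n]=-n$.
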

\begin{proof}
Assume that conditions (i)--(iii) are satisfied. Then it follows in the same way as in Step 1 of the proof 
of Theorem~\ref{t:nn} that $A$ is definitizable over $\ol\C\setminus K$.
In fact, from (i) it is clear that for every point 
$\mu\in \R^+\setminus K$ ($\mu\in\R^-\setminus K$) there exists an open neighborhood
in $\R^+$ ($\R^-$) which is of positive type (negative type, respectively), 
and for $\infty$ there exists a neighborhood where both components are of definite (but different) type. The same is true for $0$ if $0\not\in K$. Furthermore, the growth of the resolvent of $A$ near points of $\sigma(A)\setminus K$ is of order one and by
(ii) of order $2$ near $\infty$; the same holds for $0$ if $0\not\in K$ by (iii). Thus, $A$ is definitizable over $\ol\C\setminus K$, 
and hence possesses a local spectral function $E$ on $\ol\R\setminus K$. 
Now, consider some bounded open neighborhood $\calU$ of $K$ in $\C$ with $0\notin\partial\calU$. If we define $E_\infty := E(\ol\R\setminus\calU)$, then 
with respect to the space decomposition
$$
\calH = (I - E_\infty)\calH\,[\ds]\,E_\infty\calH,
$$
the operator $A$ admits the diagonal form
$$
A = \mat{A_b}00{A_\infty}.
$$
By Theorem \ref{locspecfunc} (b) the spectrum of the operator $A_\infty$ is real and $\calU\subset\rho(A_\infty)$. It follows from (i)--(iii) and Theorem~\ref{t:nn} that the operator $A_\infty$ is non-negative in $E_\infty\calH$.
By Theorem \ref{locspecfunc} (e), the operator $A_b= A|(I-E_\infty) \calH$ is bounded and,
by Theorem \ref{locspecfunc} (d), we have 
$$\sigma(A_b) \subset \bigl(\sigma(A) \setminus
(\ol\R \setminus \ol\calU)\bigr)\subset \ol\calU.$$
Therefore, we have shown that $A$ is non-negative over $\ol\C\setminus K$.

Conversely, if $A$ is non-negative over $\ol\C\setminus K$, then (i)--(iii) are consequences of the representation \eqref{e:deco}
and Theorem \ref{t:nn}. 
\end{proof}

In the same way Theorem \ref{t:nn2} allows for a version for locally non-negative operators using the decomposition~\eqref{e:deco}. We omit the details.

\section{Perturbations of non-negative operators}\label{555}
In this section, we shall see that locally non-negative operators appear naturally as perturbations of non-negative operators. 
Let $(\calH,\product)$ be a Krein space, fix a fundamental symmetry $J$ and consider the corresponding fundamental decomposition
\begin{equation} \label{Baku}
    \calH = \calH_+ \,[\ds]\, \calH_-,
\end{equation}
which is orthogonal with respect to $\product$, and  $(\calH_\pm , \pm\product)$
are both Hilbert spaces. 
The norm induced by the Hilbert space scalar product $[J\cdot,\cdot]$ will be denoted by $\|\cdot\|$.

Let us start with a particularly simple situation: Bounded perturbations of non-negative operators which are diagonal with respect to the decomposition \eqref{Baku}.
More precisely, let $A$ be a non-negative operator in $\calH$ of the form
\begin{equation}\label{Bakuu}
A = \mat{A_+}00{A_-};
\end{equation}
then $A_+$ is a non-negative operator in the Hilbert space $(\calH_+, \product)$ and $A_-$ is non-positive in the Hilbert space $(\calH_-, -\product)$. Let $V$ be a bounded self-adjoint operator in the Krein space $(\calH,\product)$. Then $V$ has the form 
\begin{equation}\label{Bakuuu}
V= \mat{V_+}{V_0}{-V_0^*}{V_-}
\end{equation}
with respect to the decomposition \eqref{Baku},
where $V_\pm$ are bounded self-adjoint operators in $\calH_\pm$ and 
$V_0$ is a bounded operator mapping from $\calH_-$ to $\calH_+$. It turns out in the next result that the perturbed operator $A+V$ is locally non-negative; here, the compact set $K$ is specified explicitly in terms of the operator norms of $V_\pm$ and $V_0$. For later purposes we emphasize that the operator norm is defined via the norm $\|\cdot\|$ induced by $[J\cdot,\cdot]$.

\begin{thm}\label{l:lmm}
The operator $A+V$ is self-adjoint in the Krein space
$(\calH,\product)$ and  non-negative over $\ol\C\setminus K$, where 
$$
K = \left\{z\in\C : \dist(z,[-\|V_+\|,\|V_-\|])\,\le\,\|V_0\|\right\}.
$$
\end{thm}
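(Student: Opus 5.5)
We plan to verify conditions (i)--(iii) of Theorem~\ref{t:eqthm} for $B := A+V$. Since $V$ is bounded and $[\cdot,\cdot]$-self-adjoint, $B$ is self-adjoint in the Krein space with $\dom B = \dom A$. It is convenient to work with $JB = JA + JV$, which is self-adjoint in the Hilbert space $(\calH,\hproduct)$. With respect to \eqref{Baku} we have $JA = \operatorname{diag}(A_+,-A_-)$ with $A_+\ge 0$ and $-A_-\ge 0$.

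\textbf{Condition (i).} We show $\sigma(B)\setminus K\subset\rho(B)\cup\R$ and determine the sign type of real points outside $K$. Let $\la\notin K$ and let $(f_n)$ be a normalized sequence with $(B-\la)f_n\to 0$, where $f_n = (x_n,y_n)$. Taking the $\hproduct$-inner products of the first component with $x_n$ and of the second with $y_n$ gives
\[
(A_+x_n,x_n)+(V_+x_n,x_n)+(V_0y_n,x_n)-\la\|x_n\|^2\to 0,
\]
\[
-(V_0^*x_n,y_n)+(A_-y_n,y_n)+(V_-y_n,y_n)-\la\|y_n\|^2\to0.
\]
Subtracting the conjugate of the second relation from the first and taking imaginary parts, the $A_\pm$ terms and the self-adjoint $V_\pm$ terms drop out. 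If $|\Im\la|>\|V_0\|$, this yields $|\Im\la|\,(\|x_n\|^2+\|y_n\|^2)\le 2\|V_0\|\,\|x_n\|\|y_n\|+o(1)$, which is a contradiction. For the remaining $\la\notin K$ we use real parts. If $\Re\la<-\|V_+\|$, the first relation with $A_+\ge0$ gives $(\Re\la+\|V_+\|)\|x_n\|^2\ge -\|V_0\|\|x_n\|\|y_n\|+o(1)$, and the second relation gives a corresponding bound on $y_n$. Combined with the imaginary-part identity, and using that $\dist(\la,[-\|V_+\|,\|V_-\|])>\|V_0\|$, these should give a contradiction when $\la\notin\R$. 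When $\la\in\R$, they should give $\liminf(\|x_n\|^2-\|y_n\|^2)<0$ (negative type) for $\la<-\|V_+\|-\|V_0\|$ and positive type symmetrically for $\la>\|V_-\|+\|V_0\|$. This is essentially the Langer--Markus--Matsaev argument. Together with $\rho(B)\ne\emptyset$ (large $|\Im\la|$) and \eqref{e:Rsssap}, which rules out residual spectrum at real points, we obtain (i). For the non-real points we additionally need to exclude residual spectrum; this follows by applying the same estimate to $B^+ = B$ at $\bar\la$.

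\textbf{Conditions (ii) and (iii).} For (ii), $A$ is non-negative, so by Theorem~\ref{t:nn} the resolvent of $A$ at $\infty$ has order $2$. For $|\la|$ large in a sector-like neighbourhood we have $\|V(A-\la)^{-1}\|\le \|V\|M|\la|^2/|\Im\la|^2$. This is small only when $|\Im\la|\gtrsim|\la|$, so a plain Neumann series does not suffice near the real axis. Instead we use the order-one growth coming from (i) near large real points of definite type. Better still, we avoid resolvent estimates and apply the local spectral theory directly. Since the open set $\C\setminus K$ consists of points of type $\pi$ or of definite type, the results of \cite{ajt,lmm} give local definitizability of $B$ over $\ol\C\setminus K$ with order-one growth at finite real points outside $K$. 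The remaining point is $\infty$, and this is the main obstacle. To handle it, we plan to write $B-\la = (I+V(A-\la)^{-1})(A-\la)$ and use the Hilbert-space structure. The pencil $JB - \la J$ is a bounded perturbation of the diagonal non-negative pencil, so the resolvent estimate
\[
\|(B-\la)^{-1}\|\le \frac{1}{\dist(\la,\,\text{sign-type region})}
\]
derived from the quadratic-form bounds above gives, for $|\la|$ large, a bound of the form $C|\la|^2/|\Im\la|^2$. Concretely, the estimates in (i) quantify $\|(B-\la)f\|\ge c|\Im\la|\,|\la|^{-1}\|f\|$ for large $|\la|$ outside a cone, and uniform bounds inside the cone. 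Condition (iii) is void if $0\in K$, which is always the case since $0\in[-\|V_+\|,\|V_-\|]$. Theorem~\ref{t:eqthm} then yields the claim.
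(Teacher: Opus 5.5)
Your route differs from the paper's. The paper absorbs $V_\pm$ into the diagonal, noting that $A_++V_+\ge-\|V_+\|$ in $\calH_+$ and $A_-+V_-\le\|V_-\|$ in $\calH_-$. It then handles the off-diagonal perturbation $\smallmat{0}{V_0}{-V_0^*}{0}$ by quoting \cite[Theorem~3.5]{BPT28}, and Theorem~\ref{t:eqthm} converts the result. You instead verify (i)--(iii) of Theorem~\ref{t:eqthm} by hand, which is a legitimate self-contained alternative. Your treatment of (iii) (void, since $0\in K$) is fine, and so is your treatment of real $\la\notin K$; note that negative type requires $\limsup<0$, not $\liminf<0$, which your bound $\|x_n\|\le q\|y_n\|+o(1)$ with $q<1$ does give.

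For non-real $\la\notin K$ with $|\Im\la|\le\|V_0\|$, however, the inequalities you name do not close. With $c:=-\|V_+\|-\Re\la>0$ they read $c\|x_n\|^2\le\|V_0\|\|x_n\|\|y_n\|+o(1)$ and $|\Im\la|\le 2\|V_0\|\|x_n\|\|y_n\|+o(1)$. These are compatible with $\la\notin K$: take $\|V_0\|=1$, $c=0.8$, $|\Im\la|=0.7$, $\|x_n\|^2=0.3$. You need two further facts:
\begin{itemize}
\item[(a)] neutrality, $\Im[(B-\la)f_n,f_n]=-\Im\la\,[f_n,f_n]$, hence $\|x_n\|^2-\|y_n\|^2\to0$;
\item[(b)] the first-row relation taken as a complex number. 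Since $|((A_++V_+-\la)x,x)|\ge|\la+\|V_+\||\,\|x\|^2$ for $\Re\la\le-\|V_+\|$, it gives $|\la+\|V_+\||\,\|x_n\|^2\le\|V_0\|\|x_n\|\|y_n\|+o(1)$. Symmetrically, use the second row if $\Re\la\ge\|V_-\|$.
\end{itemize}
Together these force $|\la+\|V_+\||\le\|V_0\|$, a contradiction.

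The genuine gap is (ii), which you call the main obstacle but never prove. None of the listed devices gives a bound that is uniform near $\infty$:
\begin{itemize}
\item local definitizability gives constants that depend on the finite real point;
\item the factorization through $(A-\la)^{-1}$ fails near $\R$, as you note yourself;
\item the bound $1/\dist(\la,\text{sign-type region})$ is neither defined nor derived;
\item the estimates of (i) are obtained by contradiction for each fixed $\la$, so they carry no uniformity as $|\la|\to\infty$.
\end{itemize}
What is missing is a quantitative version with $\la$-independent constants. Write $B=D+W$ with $D=\operatorname{diag}(A_++V_+,A_-+V_-)$, which is self-adjoint in $(\calH,\hproduct)$, and $W=\smallmat{0}{V_0}{-V_0^*}{0}$, which is skew-adjoint there. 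Then $\|(B-\la)^{-1}\|\le(|\Im\la|-\|V_0\|)^{-1}\le 2/|\Im\la|$ for $|\Im\la|>2\|V_0\|$.

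Now let $V_0\neq0$ (the case $V_0=0$ is trivial), $\Re\la\ge\|V_-\|+2\|V_0\|$ and $0<|\Im\la|\le 2\|V_0\|$. Take $f=(x,y)\in\dom B$ with $\|f\|=1$ and set $g=(B-\la)f$. The second row and $\|(A_-+V_--\la)^{-1}\|\le(\Re\la-\|V_-\|)^{-1}$ give $\|y\|\le\tfrac12\|x\|+\|g\|/(2\|V_0\|)$. Neutrality gives $|\Im\la|\,\bigl|\|x\|^2-\|y\|^2\bigr|\le\|g\|$. Together these yield $\|g\|\ge c\,|\Im\la|$ with an absolute constant $c>0$ (e.g.\ $c=1/10$). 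The case $\Re\la\le-\|V_+\|-2\|V_0\|$ is symmetric.

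Hence the resolvent of $A+V$ even has order $1$ at $\infty$, which gives (ii). Without this estimate, or the appeal to \cite[Theorem~3.5]{BPT28} that the paper makes, the proposal does not prove the theorem.
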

\begin{proof}
The operator $A_++V_+$ is semibounded from below by $-\|V_+\|$ in the Hilbert space $(\calH_+, \product)$ and the operator $A_-+V_-$ is semibounded from above by $\|V_-\|$ in the Hilbert space $(\calH_-, -\product)$. Hence,
$$
\sigma(A_++V_+) \cap \sigma(A_-+V_-)\subset [-\|V_+\|,\|V_-\|]\subset K,
$$
and the statement follows from \cite[Theorem 3.5]{BPT28} and Theorem \ref{t:eqthm}.
\end{proof}

\begin{rem}
Results in the flavor of
Theorem~\ref{l:lmm} have a long history and go back (at least) to the paper \cite{L67}, where a variant 
for bounded operators in Krein spaces under a compactness assumption was proved, 
see also \cite{lmm} and \cite{LLMT05}, as well as  
\cite[Proposition 2.6.8]{T08} and \cite [Theorem 5.5]{T09} for more general 
versions without compactness assumptions. The present version of 
Theorem~\ref{l:lmm} can be viewed as a variant of \cite[Theorem 3.5]{BPT28}, which is slightly stronger 
providing spectral estimates in terms of $\sigma(A_+)$ and $\sigma(A_-)$ explicitly. Furthermore, we mention the more recent generalizations in \cite[Theorem 4.3]{GLMPT20} and \cite[Theorem 4.1]{p23}.
\end{rem}

In applications, the situation is often slightly more complicated than above, namely, the unperturbed operator $A$ is non-negative in $\calH$, but not of the simple diagonal form \eqref{Bakuu}. Then, in general, bounded perturbations $V$ may lead to perturbed operators $A+V$, where the spectrum covers the full complex plane (see, e.g. \cite[Example 3.2]{BPT28}) and thus it is necessary to impose additional structural conditions on the unperturbed operator $A$ or the perturbation $V$. A natural (and still useful) restriction is to assume that $0$ and $\infty$ are not singular critical points of the non-negative operator $A$, in which case the spectral projections $E(\mathbb R^\pm)$ to $\mathbb R^\pm$ exist. If, in addition, $0$ is not an eigenvalue of $A$, then
\begin{equation}\label{tilde}
    \calH = E(\mathbb R^+)\calH \, [\ds]\, E(\mathbb R^-)\calH
\end{equation}
is also a fundamental decomposition of the underlying Krein space, which (in general) differs from the one in \eqref{Baku}. The corresponding fundamental symmetry $\widetilde J$ can be expressed as
\begin{equation}\label{tau0j}
\widetilde J=\frac 1 \pi\,\lim_{n\to\infty}\,\int_{1/n}^n\,\left((A + it)^{-1} + (A - it)^{-1}\right)\,dt,
\end{equation}
where the limit exists in the strong sense. It is clear that $[\widetilde J\cdot,\cdot]$ is a Hilbert space scalar product and the corresponding norm, which will denoted by $\|\cdot\|_\sim$, is 
equivalent to the norm $\|\cdot\|$ used above (see, e.g., \cite[Proposition 1.2]{L82}). As a result, the unperturbed non-negative operator $A$ admits a  representation as in \eqref{Bakuu} with respect to the fundamental decomposition \eqref{tilde}.
Therefore, if the perturbation $V$ is a bounded self-adjoint operator of the form 
\eqref{Bakuu} with
respect to the fundamental decomposition \eqref{tilde}, then Theorem~\ref{l:lmm} implies that $A+V$ is self-adjoint in the Krein space
$(\calH,\product)$ and  non-negative over $\ol\C\setminus K$, where 
\begin{equation}\label{khier}
 K = \left\{z\in\C : \dist(z,[-\|V_+\|_\sim,\|V_-\|_\sim])\,\le\,\|V_0\|_\sim\right\}.
\end{equation}
However, the aim is to express $K$ in terms of the operator norm induced via 
$\|\cdot\|$ and the fundamental symmetry $J$ corresponding to the original fundamental decomposition \eqref{Baku}. For this, the operator norm of $\widetilde J$ in \eqref{tau0j} (in terms of the original norm $\|\cdot\|$ induced by $[J\cdot,\cdot]$),
\begin{equation}\label{tau0}
\tau = \| \widetilde J\| =
\frac 1 \pi\left\|\lim_{n\to\infty}\,\int_{1/n}^n\,\left((A + it)^{-1} + (A - it)^{-1}\right)\,dt\,\right\|,
\end{equation}
is needed and, roughly speaking, leads to a different form of \eqref{khier}. 
This is made more precise in the next result,
which was proved in \cite[Theorem 3.1]{BPT28}.
We note that the quantity $\tau$ in \eqref{tau0} is, in general, smaller than the 
corresponding quantity in \cite[(3.2) and (3.17)]{BPT28} (see also \cite[(5.2)]{p23}), and hence leads to better estimates.

\begin{thm}\label{t:main1}
Let $A$ be a non-negative operator in $(\calH,\product)$ such that $0$ and $\infty$ are not singular critical points of $A$, respectively, assume that $0\notin\sigma_p(A)$, and let
$\tau$ be as in \eqref{tau0}. Furthermore, let $V$
be a bounded self-adjoint operator in $(\calH,\product)$ and let $J$ be the fundamental symmetry in \eqref{Baku}. Then
 $A+V$ is self-adjoint, and the following statements hold:
\begin{itemize}
\item [{\rm (i)}] If $V$ is non-negative, then $A+V$ is non-negative.
\item [{\rm (ii)}] If $V$ is not non-negative, then $A+V$ is
 non-negative over $\ol\C\setminus K$, where 
$$
 K = \left\{z\in\C : \dist(z,[-d,d])\,\le\,\tfrac{1+\tau}{2}\|V\| \right\}\quad\text{and}\quad 
 d = -\tfrac{1+\tau}{2}\,\min\sigma(JV).
$$
\end{itemize}
\end{thm}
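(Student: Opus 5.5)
Self-adjointness of $A+V$ is immediate: $V$ is bounded and self-adjoint in $(\calH,\product)$, so $(A+V)^+=A^++V=A+V$. All remaining arguments are carried out in the Hilbert space $(\calH,[\widetilde J\cdot,\cdot])$ from \eqref{tilde}.

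\emph{Part (i).} If $V\ge0$ in the Krein sense, then $[(A+V)f,f]\ge0$ for every $f\in\dom A$. It therefore suffices to show $\rho(A+V)\neq\emptyset$. With respect to \eqref{tilde}, $A$ is diagonal as in \eqref{Bakuu}: $A_+\ge0$ and $-A_-\ge0$ in the Hilbert spaces $(E(\R^\pm)\calH,\pm\product)$. Hence $\widetilde JA$ is a non-negative self-adjoint operator in $(\calH,[\widetilde J\cdot,\cdot])$. Moreover $\widetilde JV$ is bounded and self-adjoint there, and it is non-negative because $[\widetilde J\widetilde JVf,f]_\sim=[Vf,f]\ge0$. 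Thus $\widetilde J(A+V)$ is non-negative and self-adjoint. The standard argument (see \cite[Chapter VII]{b}) then gives $\sigma(A+V)\subset\R$: for non-real $\la$ one shows $(A+V-\la)$ is bounded below via $\Im[(A+V-\la)f,f]=-\Im\la\,[f,f]$ combined with the non-negativity of $[\widetilde J(A+V)\cdot,\cdot]_\sim$. So $\rho(A+V)\neq\emptyset$ and $A+V$ is non-negative.

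\emph{Part (ii).} Decompose $V$ with respect to \eqref{tilde} as $V=\smallmat{V_+}{V_0}{-V_0^*}{V_-}$. Theorem~\ref{l:lmm} then shows that $A+V$ is non-negative over $\ol\C\setminus\widetilde K$, where $\widetilde K$ is given by \eqref{khier} with $\|\cdot\|_\sim$-norms. A direct use of this set would lose a factor, so instead I would apply \cite[Theorem~3.5]{BPT28} (the sharper version mentioned in the remark after Theorem~\ref{l:lmm}). That result localizes the non-real spectrum near $\sigma(A_++V_+)\cap\sigma(A_-+V_-)$ within distance $\|V_0\|$, and Theorem~\ref{t:eqthm} converts this into local non-negativity.

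The key steps are:
\begin{itemize}
\item[(a)] Since $A_+\ge0$, one has $\min\sigma(A_++V_+)\ge\min\sigma(V_+)\ge -\,$(lowest negative part of $V$ measured on $E(\R^+)\calH$). Similarly, $\max\sigma(A_-+V_-)\le d'$, so the intersection lies in $[-d',d']$.
\item[(b)] Estimate $d'$ and $\|V_0\|$ in terms of the original $J$-quantities. Write $E(\R^\pm)=\tfrac12(I\pm\widetilde J)$ and use $[Vf,f]=(JVf,f)\ge\min\sigma(JV)\|f\|^2$ together with $\|E(\R^\pm)\|\le\tfrac{1+\tau}{2}$. This yields $-[Vf,f]\le -\min\sigma(JV)\|f\|^2$ for $f\in E(\R^+)\calH$. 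Passing to the $[\widetilde J\cdot,\cdot]$-norm, where $\|f\|\le\|E(\R^+)\|\,\|\cdot\|$-comparisons cost at most $\tfrac{1+\tau}{2}$, gives $d'\le d$.
\item[(c)] Bound the off-diagonal entry by $\|E(\R^+)VE(\R^-)\|\le\tfrac{1+\tau}{2}\|V\|$.
\item[(d)] Combine (a)--(c) with \cite[Theorem~3.5]{BPT28} and Theorem~\ref{t:eqthm} to conclude that $A+V$ is non-negative over $\ol\C\setminus K$.
\end{itemize}

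The main obstacle is steps (b)--(c): getting exactly the constant $\tfrac{1+\tau}{2}$ rather than something like $\tau$ or $\tau^2$. The ingredients are the identity $E(\R^\pm)=\tfrac12(I\pm\widetilde J)$ and the fact that the $\|\cdot\|_\sim$-operator norms must be re-expressed in $\|\cdot\|$-norms. I would track the norm change through the equivalence $\|f\|_\sim^2=[\widetilde Jf,f]\le\tau\|f\|^2$, and its reverse on $E(\R^\pm)\calH$. If this does not give the sharp constant, I would fall back on the cited proof in \cite[Theorem~3.1]{BPT28}, redoing its estimates with $\tau$ from \eqref{tau0}.
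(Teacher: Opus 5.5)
\textbf{Verdict: the route is right, but both parts have a genuine gap.} Applying Theorem~\ref{l:lmm} in the decomposition \eqref{tilde} and converting $\|\cdot\|_\sim$-quantities into $\|\cdot\|$-quantities via $\tau$ is the reduction the paper sketches before the theorem. The paper gives no proof of its own; it only cites \cite[Theorem~3.1]{BPT28}.

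\textbf{Part (i).} Non-negativity of $[(A+V)f,f]$ together with $\product$-self-adjointness does not by itself give $\sigma(A+V)\subset\R$, or even $\rho(A+V)\neq\emptyset$. The standard reality result for non-negative operators assumes $\rho\neq\emptyset$, so your argument is circular. The identity $\Im[(A+V-\la)f_n,f_n]=-\Im\la\,[f_n,f_n]$ only yields $[f_n,f_n]\to0$ and $[(A+V)f_n,f_n]\to0$. For a counterexample, take $T\ge0$ unbounded self-adjoint in $\mathcal K$ and the fundamental symmetry $\smallmat{0}{I}{I}{0}$ on $\mathcal K\oplus\mathcal K$. Then $B=\smallmat{0}{0}{T}{0}$ is self-adjoint with $[Bf,f]=(Tf_1,f_1)\ge0$, yet $\sigma(B)=\C$. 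The fix is short. $A$ commutes with $\widetilde J$, so it is self-adjoint in the Hilbert space $(\calH,[\widetilde J\cdot,\cdot])$. Hence $\|(A-i\eta)^{-1}\|_\sim\le|\eta|^{-1}$, and a Neumann series gives $i\eta\in\rho(A+V)$ for $|\eta|>\|V\|_\sim$. That is all (i) needs.

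\textbf{Part (ii).} Steps (b)--(c) are the whole quantitative content, and you have not supplied them.
\begin{itemize}
\item The global equivalence $\tau^{-1}\|f\|^2\le\|f\|_\sim^2\le\tau\|f\|^2$ only gives the constant $\tau$, i.e.\ a larger $K$.
\item Step (c) uses the wrong norm: Theorem~\ref{l:lmm} needs $\|V_0\|_\sim$, not $\|E(\R^+)VE(\R^-)\|$.
\item Step (c) is also false as stated. Take $\calH=\C^2$, $J=\operatorname{diag}(1,-1)$, and $A=\widetilde J$ with $E(\R^\pm)\calH$ spanned by $(1,\frac12)$ and $(\frac12,1)$. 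Then $\tau=3$, and $V=\smallmat{0}{1}{-1}{0}$ gives $\|E(\R^+)VE(\R^-)\|=\frac{25}{9}>2=\frac{1+\tau}{2}\|V\|$.
\end{itemize}
The missing lemma is a restricted norm comparison on the definite subspaces. Let $f\in E(\R^\pm)\calH$. Apply Cauchy--Schwarz in the Hilbert space $(E(\R^\pm)\calH,\pm\product)$, together with $[E(\R^\pm)x,E(\R^\pm)x]=[E(\R^\pm)x,x]$:
\[
\|f\|^2=[Jf,f]=[E(\R^\pm)Jf,f]\le\big(\pm[E(\R^\pm)Jf,Jf]\big)^{1/2}\big(\pm[f,f]\big)^{1/2}\le\|E(\R^\pm)\|^{1/2}\,\|f\|\,\|f\|_\sim .
\]
Hence $\|f\|^2\le\|E(\R^\pm)\|\,\|f\|_\sim^2\le\frac{1+\tau}{2}\|f\|_\sim^2$, since $E(\R^\pm)=\frac12(I\pm\widetilde J)$.

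With this lemma, (b) and (c) follow:
\begin{itemize}
\item For $f\in E(\R^+)\calH$ and $g\in E(\R^-)\calH$, $|[Vg,f]|\le\|V\|\|g\|\|f\|\le\frac{1+\tau}{2}\|V\|\,\|g\|_\sim\|f\|_\sim$, so $\|V_0\|_\sim\le\frac{1+\tau}{2}\|V\|$.
\item Since $\min\sigma(JV)<0$, on $E(\R^\pm)\calH$ we get $[Vf,f]\ge\min\sigma(JV)\|f\|^2\ge-d\,\|f\|_\sim^2$. Thus $A_++V_+\ge-d$ and $A_-+V_-\le d$ in the respective Hilbert spaces, so $\sigma(A_++V_+)\cap\sigma(A_-+V_-)\subset[-d,d]$.
\end{itemize}
From here you can conclude as in the proof of Theorem~\ref{l:lmm}, using \cite[Theorem~3.5]{BPT28} and Theorem~\ref{t:eqthm}. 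Non-negativity over $\ol\C\setminus K$ is monotone in $K$, so the smaller set obtained there may be enlarged to the $K$ of the statement.
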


For unbounded operators $A$ it is desirable to allow unbounded perturbations $V$, which of course
need to satisfy suitable additional conditions in order to conclude self-adjointness and further 
spectral properties of $A+V$. In the spirit of the classical Kato-Rellich theorem a natural 
assumption is relative boundedness of $V$ with respect to the unperturbed operator $A$.
This brings us to a more recent result from \cite{p23}. We denote the closed ball in $\C$ with center $z\in\C$ and radius $r > 0$ by $B_r(z)$.

\begin{thm}\label{t:caot_main}
Let $A$ be a non-negative operator in $(\calH,\product)$ such that $0$ and $\infty$ are not singular critical points of $A$, assume that $0\notin\sigma_p(A)$, and let $\tau$ be as in \eqref{tau0}. Furthermore, let  $V$ be a symmetric operator in $(\calH,\product)$ with $\dom A\subset\dom V$ such that
\[
(1+\tau)\tau\|Vf\|^2\,\le\,2a\|f\|^2 + b\|A f\|^2,\quad f\in\dom A,
\]
where $a,b\ge 0$, $b < 1$. Then the operator $A+V$ is self-adjoint, and with
$$
\nu := \inf\{[Vf,f] : f\in\dom V,\,\|f\|=1\}\geq -\infty
$$
the following statements hold:
\begin{itemize}
\item [{\rm (i)}] If $\nu\geq 0$, then $A+V$ is non-negative.
\item [{\rm (ii)}] If $\nu\in (-\infty,0)$, then
$A+V$ is non-negative over $\ol\C\setminus K$, where 
$$
 K = \bigcup_{t\in[-\gamma,\gamma]}B_{\sqrt{a + bt^2}}(t)\qquad\text{and}\qquad \gamma = \min\left\{\sqrt{\tfrac{1+\tau}{2\tau}a},\tfrac{1+\tau}{2}|\nu|\right\}.
$$
If, in addition, $b<\frac{\tau-1}{2\tau}$, then 
$A+V$ is non-negative over $\ol\C\setminus K$, where 
$$
K = \bigcup_{t\in[-\gamma,\gamma]}B_{\sqrt{\frac{1+\tau}{2\tau(1-b)}(a+bt^2)}}(t)\qquad\text{and}\qquad \gamma = \min\left\{\sqrt{\tfrac{1+\tau}{2\tau}a},\tfrac{1+\tau}{2}|\nu|\right\}.
$$
\item [({\rm iii)}] If $\nu=-\infty$, then $A+V$ is non-negative over $\ol\C\setminus K$, where 
$$
 K = \bigcup_{t\in[-\gamma,\gamma]}B_{\sqrt{a + bt^2}}(t)\qquad\text{and}\qquad \gamma = 
 \sqrt{\tfrac{1+\tau}{2\tau}a}.
$$
If, in addition, $b<\frac{\tau-1}{2\tau}$, then 
$A+V$ is non-negative over $\ol\C\setminus K$, where 
$$
K = \bigcup_{t\in[-\gamma,\gamma]}B_{\sqrt{\frac{1+\tau}{2\tau(1-b)}(a+bt^2)}}(t)\qquad\text{and}\qquad \gamma = 
 \sqrt{\tfrac{1+\tau}{2\tau}a}.
$$
\end{itemize}
Furthermore, in all cases {\rm (i)--(iii)}, $\infty$ is not a singular critical point of $A+V$.
\end{thm}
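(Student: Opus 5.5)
The plan is to reduce everything to the fundamental decomposition \eqref{tilde}, in which $A$ is diagonal. Then we check conditions (i)--(iii) of Theorem~\ref{t:eqthm} for $A+V$, with $K$ the union of balls from the statement. Write $\widetilde J$ for the fundamental symmetry \eqref{tau0j} and $\|\cdot\|_\sim$ for the norm of $[\widetilde J\cdot,\cdot]$. Since $\widetilde J=\widetilde J^{-1}$ has norm $\tau$ (and likewise $J$ in the $\sim$-geometry), the norms are related by $\tau^{-1}\|f\|^2\le\|f\|_\sim^2\le\tau\|f\|^2$.

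\textbf{Step 1 (translating the bound).} With respect to \eqref{tilde} we have $A=\operatorname{diag}(A_+,A_-)$, where $\pm A_\pm\ge 0$ in the Hilbert spaces $(E(\R^\pm)\calH,\pm\product)$. Hence $\widetilde JA=|A|$ is a non-negative self-adjoint operator in $(\calH,[\widetilde J\cdot,\cdot])$, and $\|\widetilde JAf\|_\sim=\|Af\|_\sim$. The hypothesis $(1+\tau)\tau\|Vf\|^2\le 2a\|f\|^2+b\|Af\|^2$ is converted, via the norm comparisons, into an estimate of the form $\|Vf\|_\sim^2\le a'\|f\|_\sim^2+b'\|Af\|_\sim^2$ with $b'<1$. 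The factor $(1+\tau)\tau$ is designed to absorb exactly the two passages between the norms, one for $Vf$ and one for the right-hand side. Then $\widetilde JV$ is symmetric in $(\calH,[\widetilde J\cdot,\cdot])$ with $|A|$-bound $<1$. Kato--Rellich gives that $\widetilde J(A+V)$ is self-adjoint there, so $A+V$ is $\product$-self-adjoint. In case (i), $[(A+V)f,f]\ge[Af,f]\ge0$, and $\rho(A+V)\ne\emptyset$ follows from Step 2 applied with $K$ replaced by a large ball. This gives non-negativity.

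\textbf{Step 2 (spectral enclosure and sign type).} Write $V$ as a $2\times2$ matrix $\smallmat{V_+}{V_0}{-V_0^*}{V_-}$ in \eqref{tilde}, with unbounded $A$-bounded entries. Take $\la\notin K$ and a sequence $(f_n)$ with $\|f_n\|=1$ and $(A+V-\la)f_n\to0$, and split $f_n=f_n^++f_n^-$. Pair the equation with $f_n^\pm$ in the $\pm\product$ Hilbert spaces and take real and imaginary parts. Combining $[Af,f]\ge0$, $[Vf,f]\ge\nu\|f\|^2$, and the relative bound (used as $\|V_0f^\mp\|\le\sqrt{a+b\,t^2}\,\|f^\mp\|$ roughly with $t=\Re\la$), one should obtain a contradiction when $\la\notin\R$ and $\la\notin K$. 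For real $\la$, the same estimate forces $\liminf[f_n,f_n]>0$ when $\la>\gamma$ and $\limsup<0$ when $\la<-\gamma$. The cutoff $\gamma$ arises as the point where a quadratic inequality in $\la$ stops having solutions. The minimum with $\frac{1+\tau}{2}|\nu|$ enters through the lower bound $\nu$ in the same way as $d$ does in Theorem~\ref{t:main1}. This yields condition (i) of Theorem~\ref{t:eqthm}. Condition (iii) is void because $0\in K$ whenever $a>0$ or $\nu<0$ (and when $a=0$ and $\nu\ge 0$ we are in case (i)). I expect this step to be the main obstacle: the precise bookkeeping that yields the radii $\sqrt{a+bt^2}$, respectively the sharper radii $\sqrt{\frac{1+\tau}{2\tau(1-b)}(a+bt^2)}$ under $b<\frac{\tau-1}{2\tau}$, where $Af$ has to be reabsorbed into the left-hand side. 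I would first do the case $\nu=-\infty$, using only the relative bound, and then insert $\nu$.

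\textbf{Step 3 (growth at $\infty$ and regularity).} For $|\la|$ large and off $\R$, a Neumann series gives $(A+V-\la)^{-1}=(A-\la)^{-1}\big(I+V(A-\la)^{-1}\big)^{-1}$. Here $\|V(A-\la)^{-1}\|_\sim\le \sqrt{a'}\,\|(A-\la)^{-1}\|_\sim+\sqrt{b'}\,\||A|(|A|\mp\la)^{-1}\|_\sim$, which stays below $1$ in a sector around $i\R$. The first-order growth of $(A-\la)^{-1}$ near $\infty$ (it is similar to a Hilbert space self-adjoint operator on $E(\ol\R\setminus(-1,1))\calH$) then gives order-$2$ growth for $A+V$, using the standard local argument for the remaining directions. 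This is condition (ii), and Theorem~\ref{t:eqthm} yields non-negativity over $\ol\C\setminus K$. Finally, that $\infty$ is not a singular critical point will follow by showing that the truncated integrals \eqref{tau0j} for $A+V$ over $[n,\infty)$ converge. Their difference from those of $A$ is controlled by the resolvent identity and the relative bound, so the projections $E(\ol\R\setminus[-r,r])$ stay uniformly bounded.
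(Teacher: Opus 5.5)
The paper gives no proof of this theorem; it quotes it from \cite{p23}. Your outline therefore has to carry the whole argument, and at its core it does not yet. Step 2 is the actual content of the statement, and you only announce it.

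The bound $\|V_0f^\mp\|\le\sqrt{a+bt^2}\,\|f^\mp\|$ is not available. $V_0$ is unbounded, and along an approximate eigensequence of $A+V$ you cannot trade $\|Af_n\|$ for $|t|\,\|f_n\|$, because $(A-\la)f_n\approx -Vf_n$ is as large as the term being estimated.

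Nor do the constants come from Step 1. Two passages between the norms cost $\tau^2$, and the conversion actually yields $\|Vf\|_\sim^2\le\frac{2\tau}{1+\tau}a\|f\|_\sim^2+\frac{\tau}{1+\tau}b\|Af\|_\sim^2$. Its $a$-coefficient exceeds $a$ when $\tau>1$. So the origin of the radii $\sqrt{a+bt^2}$ and of the factors $\frac{1+\tau}{2}$, $\frac{1+\tau}{2\tau}$ in $\gamma$ is exactly the missing part.

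Moreover, your intermediate claim that every real spectral point $\la>\gamma$ is of positive type is false. Take $\calH=\C^2$, $J=\operatorname{diag}(1,-1)$, $A=\operatorname{diag}(1,-1)$, so $\widetilde J=J$ and $\tau=1$. Take $V=\smallmat{21}{12}{-12}{-3}$.
\begin{itemize}
\item $JV=\smallmat{21}{12}{12}{3}$ has eigenvalues $27$ and $-3$.
\item With $b=0$ and $a=\|V\|^2=729$, this gives $\nu=-3$ and $\gamma=3$.
\item $A+V=\smallmat{22}{12}{-12}{-4}$ has the eigenvalue $4$ with eigenvector $f=(2,-3)^T$, and $[f,f]=-5<0$.
\end{itemize}
The point $4$ lies in $K$, so the theorem is fine. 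But definite type can only be proved for real points outside $K$, so whatever single estimate you intended cannot exist in the form described.

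Step 3 has two further gaps.

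First, the growth at $\infty$. The Neumann series controls $(A+V-\la)^{-1}$ only in a sector around $i\R$. Near the real axis at large $|\la|$, the local order-one estimates at points of definite type are not uniform as $|\la|\to\infty$. So condition (ii) of Theorem~\ref{t:eqthm} is not obtained by a ``standard local argument'' and needs a real proof.

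Second, the regularity of $\infty$ cannot come from the resolvent identity as you propose. You have $\|(A+V\pm it)^{-1}-(A\pm it)^{-1}\|\le\|(A+V\pm it)^{-1}\|\,\|V(A\pm it)^{-1}\|$. For $b>0$ the relative bound only gives $\|V(A\pm it)^{-1}\|_\sim\le\sqrt{a'}/t+\sqrt{b'}$, since $\|A(A\pm it)^{-1}\|_\sim=1$ when $A$ is unbounded. Hence the difference of the integrands in \eqref{tau0j} is merely $O(t^{-1})$, which is not integrable on $[n,\infty)$. You get neither convergence nor uniform bounds; the argument works only for bounded $V$.

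For relatively bounded $V$ you need a different ingredient, for instance one that exploits $\dom(A+V)=\dom A$ rather than norm estimates. You also still need an argument relating such integrals for the only locally non-negative operator $A+V$ to its projections $E(\ol\R\setminus[-r,r])$.
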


We remark that in the case of a bounded perturbation $V$, we have $b=0$ and $a = \frac{(1+\tau)\tau}2\|V\|^2$, and the statement (ii) in Theorem \ref{t:caot_main} yields the exact same result as Theorem \ref{t:main1}; cf.\ \cite[Remark 5.3]{p23}.

%\subsection*{Data Availability Statement}
 %No data were collected, generated or consulted
%in connection with this research.
%
%\subsection*{Declarations}
%
%\subsection*{Conflicts of Interest}
 %The authors have no Conflict of interest to declare that
%are relevant to the content of this article.

\section{Author Affiliations}
\end{document}